\newtheorem{theorem}{Theorem}[section]
\newtheorem{Lemma}[theorem]{Lemma}
\newtheorem{Theorem}[theorem]{Theorem}
\newtheorem*{Proposition*}{Proposition}
\newtheorem*{Conjecture*}{Conjecture}
\newtheorem*{Theorem*}{Theorem}
\title{The Hyperrigidity Conjecture for Spectrahedra}
\author{Marcel Scherer}
\date{}
\begin{document}

\maketitle
\begin{abstract}
We show that if $K$ is a compact spectrahedron whose set of extreme points is closed, then the operator system of continuous affine functions on $K$ is hyperrigid in the $C^*$-algebra $C(\textup{ex}(K))$. 
\end{abstract}
\let\thefootnote\relax\footnotetext{ \date\\
2020 \textit{Mathematics Subject Classification.}Primary 46L05, 46L07; Secondary 52A41.\\
\ \textit{Key words and phrases.} operator system, hyperrigidity conjecture, unital completely positive, commutative $C^*$-algebra, convex set, affine function, spectrahedra.}

\section{Introduction}
An \textit{operator system} $S$ is a unital, $*$-closed subspace of a $C^*$-algebra $\mathcal{A}$. A unital completely positive map, for short \textit{u.c.p.}\ map, $\phi:S\to\mathcal{B}(H)$, where $H$ is a Hilbert space, is said to have the \textit{unique extension property} (u.e.p.) if every u.c.p.\ extension of $\phi$ to $C^*(S)$ is a $*$-homomorphism. Note that, by Arveson's extension theorem (see \cite{Pa}), every u.c.p.\ map on $S$ admits at least one u.c.p.\ extension to $C^*(S)$.\\
The study of the unique extension property has proved very fruitful, and in this paper we will study \textit{hyperrigid operator systems}, i.e., separable operator systems such that for every unital $*$-homomorphism $\pi$ on $C^*(S)$ the restriction $\pi|_S$ has the u.e.p.. Such operator systems were introduced by Arveson in \cite{Arv1}, however, note that Arveson's original definition is phrased in approximation-theoretic terms, and the equivalence with the formulation above is proved in \cite{Arv1}. \\
Establishing hyperrigidity is often subtle, since one must control all $*$-representations of $C^*(S)$. A remedy would follow from Arveson's hyperrigidity conjecture:

\begin{Conjecture*}[Hyperrigidity Conjecture]
A separable operator system $S$ is hyperrigid if and only if the restriction of every irreducible representation of $C^*(S)$ to $S$ has the u.e.p..
\end{Conjecture*}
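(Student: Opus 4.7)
The plan is to treat the two directions of the biconditional separately. The forward implication is immediate from the definitions: if $S$ is hyperrigid, then every unital $*$-representation of $C^*(S)$ restricts to $S$ with the u.e.p., and irreducible representations are a particular case of such representations.

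For the converse, I would exploit the direct integral decomposition available in the separable setting. Let $\pi\colon C^*(S)\to \mathcal{B}(H)$ be an arbitrary unital $*$-representation, and let $\phi\colon C^*(S)\to \mathcal{B}(H)$ be a u.c.p.\ extension of $\pi|_S$; the target is $\phi=\pi$. Writing $\pi \cong \int^{\oplus}_X \pi_x\,d\mu(x)$ with each $\pi_x$ irreducible, the strategy is to produce a measurable field of u.c.p.\ maps $\phi_x$ into $\mathcal{B}(H_x)$ satisfying $\phi_x|_S=\pi_x|_S$ and $\phi=\int^{\oplus}\phi_x\,d\mu$. The hypothesis on irreducible representations would then force $\phi_x=\pi_x$ $\mu$-almost everywhere, yielding $\phi=\pi$. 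A compatible ingredient would be Arveson's characterization of hyperrigidity via approximation by $*$-representations, which lets one reduce $\phi=\pi$ to showing that $\phi$ lies in the point-ultraweak closed convex hull of compressions of $\pi$.

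The hard step---and the reason this biconditional remains open in full generality---is the fiberwise disintegration of $\phi$. Since $\phi$ is not a $*$-homomorphism, its Stinespring dilation lives in a potentially much larger Hilbert space than $H$, and there is no automatic reason for it to respect the central decomposition of $\pi$; one cannot simply restrict $\phi$ to the measurable fibers. In the type I case one can hope to adapt Arveson's boundary representation machinery: measurable selection theorems, together with the fact that the diagonal algebra is maximal abelian in the commutant of $\pi(C^*(S))$, should give enough control to extract the fiber maps. For general $C^*(S)$ one could try to approximate $\phi$ in the point-ultraweak topology by convex combinations of compressions of $\pi$ coming from a Stinespring dilation embedded in an ampliation of $\pi$, and then pass to a limit using a uniform u.e.p.-type estimate fiberwise. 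Extracting such a quantitative stability statement from the purely existential uniqueness of u.c.p.\ extensions of irreducibles is, I expect, the central obstacle to a fully general proof, and is what forces any concrete attack---such as the one the present paper will undertake---to impose structural hypotheses on $S$ that make the direct integral tractable.
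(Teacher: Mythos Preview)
The statement you are addressing is a \emph{Conjecture}, not a theorem, and the paper does not supply a proof of it. On the contrary, immediately after stating the conjecture the paper records that it has been \emph{refuted} in general: there are operator systems $S$ for which every irreducible representation of $C^*(S)$ restricts with the u.e.p.\ and yet $S$ fails to be hyperrigid (the paper cites \cite{BiDo, BB, Sch2, Clo}). All known counterexamples have $C^*(S)$ non-commutative, which is precisely why the paper restricts to function systems and proves the conjecture only for the special class of compact spectrahedra with closed extreme boundary.

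Consequently your proposal cannot be compared to a proof in the paper, because there is none; and more importantly, any purported proof of the full biconditional must contain a genuine error, since counterexamples exist. Your own write-up implicitly locates where the argument breaks: the fiberwise disintegration of a u.c.p.\ map $\phi$ along the direct integral of $\pi$ need not exist, because $\phi$ has no reason to respect the central decomposition of $\pi$. You describe this as ``the hard step'' and speak of the conjecture as ``open in full generality,'' but in fact it is settled in the negative, so this step is not merely hard---it is provably impossible without additional hypotheses. The forward implication you sketch is correct and is the trivial direction; the paper's actual contribution is to verify the non-trivial direction under the specific structural assumption that $K$ is a compact spectrahedron with $\mathrm{ex}(K)$ closed, via the kernel and Perron--Frobenius machinery developed in Sections~\ref{Sec: Perron} and~\ref{Sec: Spectra}.
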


In the following, we say that an operator system $S$ \textit{satisfies the conjecture} if, whenever the restriction of every irreducible representation of $C^*(S)$ to $S$ has the u.e.p., then $S$ is hyperrigid. \\
It has recently been shown that the conjecture fails in the stated generality, see \cite{BiDo, BB, Sch2, Clo}, but as a glimpse of hope, all currently known counterexamples share the feature that $C^*(S)$ is non-commutative. This motivates restricting attention to operator systems inside commutative $C^*$-algebras, also known as \textit{function systems}.\\
This paper is far from the first to pursue this point of view. We briefly review existing results of hyperrigid operator systems in commutative $C^*$-algebras and highlight some advantages of working in this setting.\\
First, if $C^*(S)$ is commutative, then we can identify it with $C(X)$ for some compact Hausdorff space $X$, and hence every irreducible representation of $C^*(S)$ is an evaluation map
  \[
    e_x:C(X)\to\mathbb{C},\ f\mapsto f(x),\ x\in X.
  \]
Second, set
  \[
    K=\{\phi:S\to\mathbb{C};\ \phi\ \textup{is a state}\}.
  \]
Then $K$ is a compact convex set, and we can consider $A(K)$, the space of continuous affine functions on $K$. Denoting the extreme points of $K$ by $\textup{ex}(K)$, we regard $A(K)$ as a subspace of $C(\overline{\textup{ex}(K)})$, thereby turning it into an operator system. The canonical map
  \[
    S\to A(K), s\mapsto [\phi\mapsto \phi(s)]
  \]
is a unital completely isometric map with dense image. In this way, questions about hyperrigidity of $S$ can be translated into questions about the function system $A(K)$.
\begin{Proposition*}
$S$ is hyperrigid in its $C^*$-envelope $C^*_e(S)$ if and only if $A(K)$ is hyperrigid in $C(\overline{\textup{ex}(K)})$.
\end{Proposition*}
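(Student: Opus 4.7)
The plan is two steps: (i) identify $C^*_e(S)$ with $C(\overline{\textup{ex}(K)})$ via the canonical map, and (ii) use density of the embedding together with the norm-continuity of u.c.p.\ maps to transfer the unique extension property between $S$ and $A(K)$.

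For (i), write $\iota:S\to A(K)$, $s\mapsto[\phi\mapsto\phi(s)]$, for the canonical map; as recalled in the excerpt, $\iota$ is a unital complete isometry with dense range in $A(K)$ (density follows from the fact that $\iota(S)$ is self-adjoint, contains $1$, and separates the points of $K$, via a Hahn-Banach argument for continuous affine functions). Since $A(K)$ separates the points of $\overline{\textup{ex}(K)}$, the Stone-Weierstrass theorem gives $C^*(A(K))=C(\overline{\textup{ex}(K)})$. Milman's converse to the Krein-Milman theorem ensures that the Choquet boundary of $A(K)$ in $C(\overline{\textup{ex}(K)})$ is precisely $\textup{ex}(K)$, so the Shilov boundary is $\overline{\textup{ex}(K)}$, and Arveson's characterization of the C*-envelope of a function system yields $C^*_e(A(K))=C(\overline{\textup{ex}(K)})$. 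Because $\iota$ is a unital complete order embedding with dense range, $C^*_e(S)$ is canonically identified with $C(\overline{\textup{ex}(K)})$ as well.

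For (ii), fix a unital $*$-representation $\pi:C(\overline{\textup{ex}(K)})\to\mathcal{B}(H)$ and any u.c.p.\ map $\psi:C(\overline{\textup{ex}(K)})\to\mathcal{B}(H)$. Since u.c.p.\ maps are contractive and $\iota(S)$ is norm-dense in $A(K)$, one has $\psi|_{\iota(S)}=\pi|_{\iota(S)}$ if and only if $\psi|_{A(K)}=\pi|_{A(K)}$. Hence $\pi|_{\iota(S)}$ has the unique extension property precisely when $\pi|_{A(K)}$ does, and quantifying over all $\pi$ gives the desired equivalence. Separability is preserved in both directions because $\iota$ is an isometry with dense image.

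The main obstacle is the identification in (i), specifically that the Choquet boundary of $A(K)$ inside $C(\overline{\textup{ex}(K)})$ coincides with $\textup{ex}(K)$; this is the only part that requires genuine Choquet-theoretic input. Once (i) is in place, the transfer of hyperrigidity is essentially formal, amounting to norm-continuity of u.c.p.\ maps on a dense subspace.
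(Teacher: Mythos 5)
Your argument is correct. Note that the paper itself offers no proof of this proposition (it explicitly declines to go into detail, since the $C^*$-envelope plays no further role), so there is nothing to compare against line by line; your route is the standard one and it works. Step (ii) is the heart of the matter and is exactly right: since $\iota(S)$ is norm-dense in $A(K)$ at every matrix level and u.c.p.\ maps are contractive, a u.c.p.\ map on $C(\overline{\textup{ex}(K)})$ agrees with $\pi$ on $\iota(S)$ iff it agrees with $\pi$ on $A(K)$, so the two restrictions of $\pi$ have the same set of u.c.p.\ extensions and hence the u.e.p.\ simultaneously. Two small points you state but do not fully justify: first, the identification $C^*_e(A(K))=C(\overline{\textup{ex}(K)})$, which indeed follows from the Choquet-theoretic fact that an evaluation $e_x$, $x\in\overline{\textup{ex}(K)}$, restricted to $A(K)$ has a unique state extension (namely a representing measure on $\overline{\textup{ex}(K)}$ with barycenter $x$) exactly when $x$ is extreme; second, the claim that a unital complete order embedding with dense range identifies $C^*$-envelopes. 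The latter deserves a sentence: $C(\overline{\textup{ex}(K)})$ is a $C^*$-cover of $\iota(S)$ by Stone--Weierstrass, and any ideal whose quotient map is completely isometric on $\iota(S)$ is, by density of $M_n(\iota(S))$ in $M_n(A(K))$ and continuity, completely isometric on $A(K)$, hence contained in the (trivial) Shilov ideal of $A(K)$; so the Shilov ideal of $\iota(S)$ is also trivial and $C^*_e(S)\cong C^*_e(A(K))$. With these two points filled in, the proof is complete.
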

Since the $C^*$-envelope of an operator system will not be relevant beyond this proposition, we do not go further into detail. For our purposes it suffices to note that if $S$ is hyperrigid in $C^*(S)$, then $C^*(S)=C^*_e(S)$.\\
This new viewpoint connects hyperrigidity to classic convex-geometric concepts, for example:
\begin{Proposition*}
The restriction of every irreducible representation of $C(\overline{\textup{ex}(K)})$ to $A(K)$ has the u.e.p. if and only if $\overline{\textup{ex}(K)}=\textup{ex}(K)$.
\end{Proposition*}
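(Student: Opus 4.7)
The plan is to reformulate the u.e.p.\ condition at each $x\in\overline{\textup{ex}(K)}$ in measure-theoretic terms, and then argue the two directions using Choquet-type tools.

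As a first step, note that the u.c.p.\ extensions of $e_x|_{A(K)}$ to $C(\overline{\textup{ex}(K)})$ are exactly the states on $C(\overline{\textup{ex}(K)})$ extending $e_x|_{A(K)}$, which by Riesz representation correspond bijectively to the probability measures $\mu$ on $\overline{\textup{ex}(K)}$ with $\int a\,d\mu=a(x)$ for every $a\in A(K)$. Such an extension is a $*$-homomorphism precisely when $\mu$ is a point mass, and since $A(K)$ separates the points of $K$ that point mass must equal $\delta_x$. Thus $e_x|_{A(K)}$ has the u.e.p.\ iff $\delta_x$ is the unique such representing measure, and the claim reduces to showing that this uniqueness holds for every $x\in\overline{\textup{ex}(K)}$ exactly when $\overline{\textup{ex}(K)}=\textup{ex}(K)$.

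For ``$\Leftarrow$'', I would assume $\overline{\textup{ex}(K)}=\textup{ex}(K)$, fix $x\in\textup{ex}(K)$, and let $\mu$ be a probability measure on $\textup{ex}(K)$ representing $x$. Milman's theorem applied to the compact set $\textup{supp}(\mu)$ yields $\textup{ex}(\overline{\textup{conv}}(\textup{supp}(\mu)))\subseteq\textup{supp}(\mu)$; since the barycenter $x$ lies in $\overline{\textup{conv}}(\textup{supp}(\mu))$ and is extreme in $K$, hence in this smaller convex set, one obtains $x\in\textup{supp}(\mu)$. If $\mu\neq\delta_x$, pick $y\in\textup{supp}(\mu)\setminus\{x\}$ and an open $V\ni y$ with $x\notin\overline{V}$. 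Then $0<\mu(V)<1$, and the decomposition $\mu=\mu(V)\mu_V+\mu(V^c)\mu_{V^c}$ expresses $x$ as a nontrivial convex combination of the barycenters of $\mu_V$ and $\mu_{V^c}$; extremality forces both barycenters to equal $x$. A second application of Milman to $\mu_V$, whose support lies in $\overline{V}$, then gives $x\in\overline{V}$, contradicting the choice of $V$.

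For ``$\Rightarrow$'', I would take $x\in\overline{\textup{ex}(K)}$ and assume toward a contradiction that $x$ is not extreme in $K$, so $x=\tfrac12(y+z)$ for some distinct $y,z\in K$. Since $S$ is separable the state space $K$ is metrizable, so Choquet's theorem supplies probability measures $\mu_y,\mu_z$ on $\textup{ex}(K)\subseteq\overline{\textup{ex}(K)}$ with barycenters $y$ and $z$. Their average $\tfrac12(\mu_y+\mu_z)$ is a representing measure for $x$ on $\overline{\textup{ex}(K)}$ distinct from $\delta_x$, because if it were $\delta_x$ then positivity would force $\mu_y=\mu_z=\delta_x$ and hence $y=z=x$. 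This contradicts the assumed u.e.p.\ at $x$.

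The principal obstacle is the ``$\Leftarrow$'' direction: extreme points of $K$ need not be exposed, so one cannot separate $x$ from $\textup{ex}(K)\setminus\{x\}$ by a single affine hyperplane. The argument therefore uses Milman's theorem twice, first to pin $x$ inside $\textup{supp}(\mu)$ and then, after an extremality-driven local decomposition, to force $x$ into a region explicitly chosen to avoid it.
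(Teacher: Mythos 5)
Your proof is correct. The paper states this proposition without proof (it is quoted as known background, essentially the combination of Bauer's characterization of extreme points via uniqueness of representing measures with Choquet's theorem), and your argument — reformulating the u.e.p.\ at $e_x$ as uniqueness of the representing measure $\delta_x$ on $\overline{\textup{ex}(K)}$, then using Milman for ``$\Leftarrow$'' and Choquet for ``$\Rightarrow$'' — is exactly the standard route; the only hypothesis worth flagging explicitly is that metrizability of $K$ (needed for Choquet) comes from the standing separability assumption on $S$.
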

This perspective was exploited, for instance, in \cite{DaKe}, where the classical Choquet order was generalized to the dilation order, and in \cite{Sch1}, where it was shown that $A(K)$ is hyperrigid in $C(\textup{ex}(K))$ for every compact convex set $K\subset\mathbb{R}^2$ (in this case $\textup{ex}(K)$ is automatically closed).\\
Further examples in commutative $C^*$-algebras were obtained in \cite{PaSt}: if $f\in C(X)$ separates the points of $X$ and $p,q,r\in\mathbb{Z}_+$ satisfy $p\neq q$ and $p+q<2r$, then the operator system generated by
  \[
    \{|f|^{2r}, \bar{f}^pf^q\}
  \]
is hyperrigid in $C(X)$.\\
Another general source of hyperrigid examples comes from the multiplicative domain: for a u.c.p. $\phi$ on a $C^*$-algebra $\mathcal{A}$, the set
  \[
    \{x\in\mathcal{A};\ \phi(x^*)\phi(x)=\phi(x^*x)\ \textup{and}\ \phi(x)\phi(x^*)=\phi(xx^*)\}
  \]
is a $C^*$-algebra (see \cite{Pa}). Thus, if $\mathcal{A}$ is generated by $x_1,\dots,x_n$, then the operator system generated by
  \[
    \{x_i;\ 1\le i\le n\}\cup\{x_i^*x_i;\ 1\le i\le n\}\cup\{x_ix_i^*;\ 1\le i\le n\}
  \]
is hyperrigid in $\mathcal{A}$. Variants of this observation recover the basic examples in \cite{Arv1} and, for instance, yield the hyperrigidity of $A(\overline{\mathbb{B}_d})$ in $C(S^{d-1})$, where $\mathbb{B}_d$ is the unit ball in $\mathbb{R}^d$ and $S^{d-1}$ the sphere.\\ 
We conclude our list of known examples of hyperrigid operator systems in commutative $C^*$-algebras with two observations. First, if $X$ is countable, then every operator system in $C(X)$ satisfies the conjecture by \cite{Arv1}. Second, if $S$ is hyperrigid in $C^*(S)$, then every operator system $\tilde{S}$ in $C^*(S)$ with $S\subset\tilde{S}$ is hyperrigid as well.\\
In this paper, we add spectrahedra to the list of convex sets that give rise to hyperrigid function systems. Recall that a spectrahedron is a set of the form
  \[
    \left\{(z_1,\dots,z_k)\in\mathbb{R}^k;\ Q(z)=A_0+\sum_{i=1}^kz_iA_i\ge0\right\},
  \]
where $A_0, A_1,\dots, A_k$ are selfadjoint matrices. Such a set need not be compact, so throughout the paper we assume compactness. The following theorem is Theorem \ref{Thm: main}.
  \begin{Theorem*}
  Let $K$ be a compact spectrahedron such that $\textup{ex}(K)$ is closed. Then $A(K)$ is hyperrigid in $C(\textup{ex}(K))$.
  \end{Theorem*}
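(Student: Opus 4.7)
My plan is to prove hyperrigidity by showing that the multiplicative domain
\[
    M(\psi) = \{f \in C(\textup{ex}(K)) : \psi(f^{*}f) = \psi(f)^{*}\psi(f)\ \text{and}\ \psi(ff^{*}) = \psi(f)\psi(f^{*})\}
\]
contains $A(K)$ for any u.c.p.\ map $\psi \colon C(\textup{ex}(K)) \to B(H)$ with $\psi|_{A(K)} = \pi|_{A(K)}$, where $\pi$ is an arbitrary $*$-representation of $C(\textup{ex}(K))$. Since $A(K)$ contains the coordinates $z_{1}, \dots, z_{k}$, which separate the points of $\textup{ex}(K)$, Stone--Weierstrass gives $C^{*}(A(K)) = C(\textup{ex}(K))$; hence once $A(K) \subseteq M(\psi)$ we get $M(\psi) = C(\textup{ex}(K))$, so $\psi$ is a $*$-homomorphism agreeing with $\pi$ on a generating set, which forces $\psi = \pi$.

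The geometric ingredient is the classical characterization of spectrahedron extreme points: $z_{0} \in \textup{ex}(K)$ iff the map $w \mapsto \bigl(\sum_{i} w_{i} A_{i}\bigr)|_{\ker Q(z_{0})}$ is injective, verified by a Schur-complement expansion of $Q(z_{0} \pm \varepsilon w) \geq 0$; equivalently, $\{z \in K : \ker Q(z) \supseteq \ker Q(z_{0})\} = \{z_{0}\}$. Fix any $z_{0} \in \textup{ex}(K)$, pick an orthonormal basis $v_{1}, \dots, v_{r}$ of $\ker Q(z_{0})$, assemble it into $V \in M_{n,r}(\mathbb{C})$, and consider two objects: the affine peak function
\[
    g_{z_{0}} := \sum_{j=1}^{r} v_{j}^{*} Q v_{j} = \operatorname{tr}(V^{*}QV) \in A(K)_{+},
\]
which is nonnegative on $K$ and vanishes exactly at $z_{0}$, together with the rectangular matrix of affine functions $M_{z_{0}} := QV \in M_{n,r}(A(K))$, all of whose entries vanish at $z_{0}$.

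The heart of the proof is to establish the matrix-level identity
\[
    (\psi \otimes \mathrm{id})(M_{z_{0}}^{*} M_{z_{0}}) = (\pi \otimes \mathrm{id})(M_{z_{0}}^{*} M_{z_{0}}).
\]
The matrix Schwarz inequality combined with $\psi|_{A(K)} = \pi|_{A(K)}$ yields $\geq$; for $\leq$, I combine the operator inequality $V^{*}Q^{2}V \leq \|Q\|_{\infty}\, V^{*}QV$ (from $Q^{2} \leq \|Q\|_{\infty} Q$ for $Q \geq 0$), which shows the nonnegative Schwarz gap is dominated by a constant multiple of $g_{z_{0}}$, with the closedness of $\textup{ex}(K)$. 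By the Proposition in the Introduction, closedness is equivalent to scalar u.e.p.\ at every character $e_{z_{0}}$, and that scalar fact -- combined with the $g_{z_{0}}$-majorization of the Schwarz gap and the rank-drop structure of $Q$ on $\textup{ex}(K)$ -- is to be promoted to the operator-valued identity via a Stinespring-dilation / vector-state argument. Once the identity is proved, each entry of $M_{z_{0}}$ lies in $M(\psi)$; since by the extremality of $z_{0}$ these entries span the $k$-dimensional subspace of $A(K)$ consisting of affine functions vanishing at $z_{0}$, adjoining the constants yields $A(K) \subseteq M(\psi)$.

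The main obstacle is the gap-closing step: the scalar u.e.p.\ handed to us by closedness of $\textup{ex}(K)$ governs only state-level (probability-measure) extensions, whereas hyperrigidity demands control over arbitrary $B(H)$-valued u.c.p.\ extensions. Promoting the scalar identity to the operator-valued setting is the genuine content of the theorem, and it is here that the matrix positivity of $Q$ and the closedness of $\textup{ex}(K)$ must be used in tandem -- the former to produce the peaking matrix $M_{z_{0}}$ controlled by $g_{z_{0}}$, and the latter to enable a uniform compactness / disintegration argument across $\textup{ex}(K)$.
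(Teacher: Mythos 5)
Your framework (multiplicative domain, Stone--Weierstrass, the Ramana--Goldman characterization of extreme points, and the domination $V^{*}Q^{2}V\le\|Q\|_{\infty}V^{*}QV$) is sound as far as it goes, and the objects $g_{z_{0}}$ and $M_{z_{0}}$ are morally the same peaking data the paper extracts from the kernels $(\omega,z)\mapsto\gamma(z)^{*}Q(x)\gamma(\omega)$. But the proposal has a genuine gap exactly where you flag it, and the flagged step is not a technicality that a routine Stinespring argument fills in. What your inequalities actually yield is
\[
  0\le(\psi\otimes\mathrm{id})(M_{z_{0}}^{*}M_{z_{0}})-(\pi\otimes\mathrm{id})(M_{z_{0}}^{*}M_{z_{0}})\le(\pi\otimes\mathrm{id})\bigl(V^{*}(\|Q\|_{\infty}Q-Q^{2})V\bigr),
\]
and the right-hand side is $\pi$ of a function vanishing only at $z_{0}$; compressing by $\pi(\chi_{W})$ for small neighborhoods $W$ of $z_{0}$ therefore kills the gap only on the fiber over $z_{0}$, i.e.\ it gives $\pi(\chi_{\{z_{0}\}})\,D\,\pi(\chi_{\{z_{0}\}})=0$. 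This does not put any entry of $M_{z_{0}}$ into $M(\psi)$, because membership in the multiplicative domain requires the \emph{global} identity $\psi(|f|^{2})=|\psi(f)|^{2}$, and the gap away from the fiber over $z_{0}$ is untouched. Worse, for a representation $\pi$ with non-atomic spectral measure every fiber projection $\pi(\chi_{\{z_{0}\}})$ is zero, so the pointwise statements are vacuous; and the peaking matrix $M_{z_{0}}$ changes with $z_{0}$ and cannot be chosen continuously where $\dim\ker Q(\cdot)$ jumps, so there is no obvious way to sum or integrate these local statements. The ``uniform compactness / disintegration argument'' is precisely the missing theorem, and the scalar u.e.p.\ supplied by closedness of $\mathrm{ex}(K)$ cannot be ``promoted'' for free --- that promotion is the hyperrigidity conjecture itself, which is false in general.

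For comparison, the paper resolves exactly these two obstructions by different means: it stratifies $\partial K$ into the locally closed pieces $K_{i}=\{z:\dim\ker Q(z)=i\}$ and builds \emph{local continuous} kernel selections $\gamma$ (Lemmas \ref{Lem: loc clo}--\ref{Lem: cont gamma 2}); it then proves a quantitative two-point localization $\pi(\chi_{F})\phi(\chi_{U})\pi(\chi_{F})=0$ for $F$ near one extreme point and $U$ a neighborhood of another (Lemma \ref{Lem: Main}, via Perron--Frobenius bounds and a Hadamard-inverse majorization $(\chi_{U\cap K})_{i,j}\le\frac{1}{1-s}B\circ A|_{K}$); and finally it converts these off-diagonal vanishing statements into $\phi=\pi$ by a purely measure-theoretic separation principle (Theorem \ref{Thm: final step}). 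Your proposal contains no analogue of either the off-diagonal estimate or the final assembling theorem, so as written it does not prove the statement.
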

The proof proceeds by constructing positive elements in $M_n(A(K))$, the $n\times n$ matrices with entries in $A(K)$, from positive semidefinite kernels associated to the defining pencil $Q(\cdot)$. Concretely, on suitable subsets of $\partial K$, we select a normalized map $\gamma$ with $\gamma(z)\in\ker(Q(z))$ and use it to build kernels of the form
  \[
    (\omega,z)\mapsto \gamma(z)^*Q(x)\gamma(\omega), \qquad x\in K,
  \]
see Section \ref{Sec: Spectra}. From these kernels we obtain positive elements $A(\cdot)\in M_n(A(K))$. A key step is then to dominate characteristic matrices via a Hadamard product estimate
  \[
    (\chi_V)_{1\le i,j\le n}\le A(\cdot)\circ B,
   \]
where $V$ is a suitable subset of $\partial K$ and $B$ is a positive matrix constructed using \cite{Rea} with quantitative Perron-Frobenius bounds. The required Perron eigenvalue/eigenvector estimates and the necessary background on this theory are given in the following Section \ref{Sec: Perron}.

\section{Perron Eigenvalues and Eigenvectors}\label{Sec: Perron}

Let $A\in M_n$ be a matrix with strictly positive entries. By the Perron-Frobenius theorem, the spectral radius
  \[
    \rho(A)=\max\{|\mu|;\ \mu\in\sigma(A)\}
  \]
is a simple eigenvalue of $A$ and admits a unique normalized eigenvector $x=(x_1,\dots,x_n)\in\mathbb{R}^n$, the \textit{normalized Perron eigenvector}, such that $x_i>0$ for all $i$. We call $\rho(A)$ the \textit{Perron eigenvalue} of $A$. In particular, if $A$ is selfadjoint, then $\rho(A)=\|A\|$. \\

For the proof of the main theorem, we need the following bounds on the Perron eigenvalue and on the entries of the normalized Perron eigenvector.

\begin{Lemma}\label{Lem: Perron Bound}
Let $A=(a_{i,j})_{1\le i,j\le n}$ be selfadjoint with strictly positive entries and assume that there exist constants $0<c\le b$ such that 
  \[
    c\le a_{i,j}\le b, \qquad 1\le i,j\le n.
  \]    
Let $x$ be the normalized Perron eigenvector and $\lambda$ the Perron eigenvalue of $A$. Then, for all $1\le i\le n$,
  \[
    \tfrac{c}{b\sqrt{n}}\le x_i\le \tfrac{b}{c\sqrt{n}},
  \]
and 
  \[
    nc\le \lambda.
  \]
In particular, for all $1\le i,j\le n$,
  \[
    \tfrac{c^3}{b^2}\le\langle\lambda xx^* e_i,e_j\rangle
  \]
  for all $1\le i,j\le n$.
\end{Lemma}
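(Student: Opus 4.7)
The plan is to extract everything from the eigenvalue equation $Ax=\lambda x$ together with the normalization $\|x\|=1$ and the elementary fact that for a selfadjoint matrix $\lambda=\rho(A)=\|A\|$.

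First I would compare coordinates of $x$ using the eigenvalue equation. For each $i$,
  \[
    \lambda x_i=\sum_{j=1}^n a_{i,j}x_j,
  \]
and since $x_j>0$ and $c\le a_{i,j}\le b$, setting $S=\sum_j x_j$ gives $cS\le \lambda x_i\le bS$ for every $i$. Consequently
  \[
    \min_i x_i\ge \tfrac{cS}{\lambda},\qquad \max_i x_i\le \tfrac{bS}{\lambda},
  \]
so $\max_i x_i\le \tfrac{b}{c}\min_i x_i$.

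Next I would feed in the normalization $\sum_i x_i^2=1$. Since the average of $x_i^2$ equals $1/n$, one has $\min_i x_i\le 1/\sqrt{n}\le \max_i x_i$. Combining with the ratio estimate yields
  \[
    x_i\ge \min_i x_i\ge \tfrac{c}{b}\max_i x_i\ge \tfrac{c}{b\sqrt{n}},\qquad x_i\le \max_i x_i\le \tfrac{b}{c}\min_i x_i\le \tfrac{b}{c\sqrt{n}},
  \]
which is the desired bound on the entries of the Perron eigenvector.

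For the Perron eigenvalue I would use that $A$ is selfadjoint, so $\lambda=\|A\|$. Testing against the unit vector $v=(1,\dots,1)/\sqrt{n}$ gives
  \[
    \lambda=\|A\|\ge \langle Av,v\rangle=\tfrac{1}{n}\sum_{i,j}a_{i,j}\ge \tfrac{1}{n}\cdot n^2c=nc.
  \]
Finally, since the entries of $x$ are real and positive, $\langle \lambda xx^*e_i,e_j\rangle=\lambda x_i x_j$, and combining the three lower bounds produced above yields
  \[
    \lambda x_i x_j\ge nc\cdot \tfrac{c}{b\sqrt{n}}\cdot \tfrac{c}{b\sqrt{n}}=\tfrac{c^3}{b^2}.
  \]
There is no real obstacle here; the argument is purely elementary, and the only point requiring a moment of thought is the order in which one uses the eigenvalue equation and the normalization to close the loop between the entry bounds and the bound on $\lambda$.
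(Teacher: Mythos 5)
Your proof is correct and follows essentially the same route as the paper. The eigenvector bound is obtained identically: both arguments derive $\max_i x_i \le \tfrac{b}{c}\min_i x_i$ from the eigenvalue equation and the entry bounds, then close with the $\ell^2$-normalization (your ``average of $x_i^2$ is $1/n$'' is the paper's chain $n(\min_i x_i)^2\le 1\le n(\max_i x_i)^2$). For the eigenvalue bound the paper writes $\|A\|\ge\|A\mathbf{1}\|_2/\|\mathbf{1}\|_2$ while you use the Rayleigh quotient $\|A\|\ge\langle A\mathbf{1},\mathbf{1}\rangle/\|\mathbf{1}\|_2^2$; both test against the all-ones vector and give the same constant $nc$, so this is a cosmetic rather than substantive difference.
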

  
\begin{proof}
With the above notations, let $x_l=\textup{min}_j\{x_j\}$ and $x_k=\textup{max}_j\{x_j\}$. Since $Ax=\lambda x$ and $c\le a_{k,i}\le b$, we have
   \[
     \lambda x_k=\sum_{i=1}^n a_{k,i}x_i\le b\|x\|_1
  \]
  and
  \[
    \lambda x_l=\sum_{i=1}^n a_{l,i}x_i\ge c\|x\|_1.
  \]
Therefore,
  \[ 
     \lambda x_k\le b\|x\|_1= \tfrac{b}{c} c\|x\|_1\le\tfrac{b}{c} \lambda x_l,
  \]
so $x_k\le\tfrac{b}{c}x_l$. Using $\|x\|_2=1$ we obtain
     \[
      \tfrac{nc^2}{b^2}x_k^2\le n x_l^2\le1\le n x_k^2\le \tfrac{nb^2}{c^2} x_l^2, 
  \]
and thus
  \[
    \tfrac{c}{b\sqrt{n}}\le x_l\le x_k\le \tfrac{b}{c\sqrt{n}}. 
  \]
For the eigenvalue bound, let $\mathbf{1}=(1,\dots,1)^T$. Since each entry of $A\mathbf{1}$ is at least $nc$, we have
  \[
    \|A\mathbf{1}\|_2\ge nc\|\mathbf{1}\|_2,
  \]
and therefore
  \[
    nc\le \tfrac{\|A\mathbf{1}\|_2}{\|\mathbf{1}\|_2}\le \|A\|=\lambda.
  \]
Finally, $\langle \lambda xx^*e_i,e_j\rangle=\lambda x_ix_j\ge(nc)\left(\tfrac{c}{b\sqrt{n}}\right)^2=\tfrac{c^3}{b^2}$.
\end{proof}

\section{Spectrahedra}\label{Sec: Spectra}

Let $\textup{Sym}_m$ be the set of real symmetric $m\times m$ matrices, and let $\textup{Sym}_m^+\subset\textup{Sym}_m$ be the cone of positive semidefinite matrices. Given selfadjoint matrices $A_0,\dots, A_k$ in $M_m$, the map
  \[
    Q:\mathbb{R}^k\to M_m, (z_1,\dots,z_k)\mapsto A_0+\sum_{i=1}^k z_i A_i
  \]
is called a \textit{matrix pencil} and the set
  \[
    K=\{z\in\mathbb{R}^k;\ Q(z)\ge0\}
  \]
a \textit{spectrahedron}. If the matrices $A_0,\dots, A_k$ are in $\textup{Sym}_m$, then the pencil $Q$ is called \textit{symmetric}. It is clear that $K$ is convex and closed, however, not necessarily compact or non-empty. Since these cases are not of interest in terms of hyperrigidity, we will assume throughout the paper that $K$ is compact and non-empty. \\
It is well-known that for every spectrahedron $K$ there exists a symmetric pencil $\tilde{Q}$ such that 
  \[
    K=\{z\in\mathbb{R}^k;\ \tilde{Q}(z)\ge0\}.
  \]
Indeed, let $A_0,\dots, A_k$ be selfadjoint and $Q$ be as above. Let $A_i=X_i+iY_i$ with $X_i=\textup{Re}(A_i)$ and $Y_i=\textup{Im}(A_i)$ (entrywise), so $X_i, Y_i\in M_m(\mathbb{R})$. Define the symmetric pencil
  \[
    \tilde{Q}(z)=\begin{pmatrix} X_0 & -Y_0\\ Y_0 & X_0 \end{pmatrix}+\sum_{i=1}^k z_i\begin{pmatrix} X_i & -Y_i\\ Y_i & X_i \end{pmatrix}.
  \]
Then $Q(z)\ge0$ if and only if $\tilde{Q}(z)\ge0$, hence
  \[
    K=\{z\in\mathbb{R}^k;\ Q(z)\ge0\}=\{z\in\mathbb{R}^k;\ \tilde{Q}(z)\ge0\}.
  \]
So we may and will assume for the rest of the paper that the pencil $Q$ is symmetric.\\
Since the cone of strictly positive definite matrices is open, we have
  \[
    \ker(Q(z))\neq\{0\}
  \]
for all $z\in\partial K$. Thus there exists a (not necessarily continuous) map
  \[
    \gamma:\partial K\to \{z\in\mathbb{R}^m;\ \|z\|_2=1\}
  \]
such that $\gamma(z)\in\ker(Q(z))$ for all $z\in\partial K$. For $z\in\mathbb{R}^k$ define
  \[
    k^\gamma_z:\partial K\times \partial K\to\mathbb{R}, (x,y)\mapsto \gamma(y)^*Q(z)\gamma(x).
  \]
 It is straightforward to check that for every $z\in K$ the kernel $k^\gamma_z$ is positive semidefinite. Hence, given $t_1,\dots,t_n\in\partial K$, the map
   \[
    A:K\to M_n, z\mapsto (k^\gamma_z(t_i,t_j))_{1\le i,j\le n},
  \]
is a continuous affine function with $A(z)\ge0$ for all $z\in K$. Moreover, if $x, y\in\partial K$, then
  \[
    k^\gamma_x(x,y)=0\qquad \textup{and}\qquad k^\gamma_y(x,y)=0.
  \]
For our purposes we will need $\gamma$ to be continuous, however, in general one cannot expect to find a continuous choice on all of $\partial K$. So we decompose the boundary according to the dimension of the kernel. Set
  \[
    K_i=\{z\in\partial K;\ \dim \ker(Q(z))=i\},\qquad i=1,\dots,m.
  \]
Then $\partial K=\bigcup_{i=1}^mK_i$ and the sets $K_i$ are pairwise disjoint. Note that if $K_m\neq\emptyset$ and $z\in K_m$, then $Q(z)=0$, so for every $\omega\in K$ and every $t\le 1$,
  \[
    Q(\omega+t(z-\omega))=A_0+\sum_{i=1}^k(\omega_i+t(z_i-\omega_i))A_i=Q(\omega)-tQ(\omega)=(1-t)Q(\omega)\ge0
  \]
Thus $\omega+t(z-\omega)\in K$ for all $t\le 1$, and compactness of $K$ forces $K=\{z\}$.

\begin{Lemma}\label{Lem: empty ker}
For $x\in\textup{ex}(K)$, let $P\in \mathcal{B}(\mathbb{R}^m)$ be the projection onto $\ker(Q(x))$. Then
  \[
    \{x\}=K\cap \{z\in \mathbb{R}^k;\ PQ(z)P=0\}.
  \]
\end{Lemma}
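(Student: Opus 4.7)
The plan is to verify the two inclusions separately. The inclusion $\{x\}\subseteq K\cap\{z:PQ(z)P=0\}$ is immediate, since $P$ annihilates $Q(x)$ by the very definition of $P$.

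For the reverse inclusion, fix $z\in K$ with $PQ(z)P=0$. I would first upgrade this to $Q(z)P=0$. Since $Q(z)\ge 0$ one can write $PQ(z)P=(Q(z)^{1/2}P)^*(Q(z)^{1/2}P)$, so the vanishing forces $Q(z)^{1/2}P=0$ and hence $Q(z)P=0$. In particular $\ker Q(x)=\textup{range}(P)\subseteq\ker Q(z)$, and both $Q(x)$ and $Q(z)$ decompose as $(I-P)Q(\cdot)(I-P)$.

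Next, I would exploit the extremality of $x$ by producing a short segment in $K$ that passes through $x$ in the direction pointing from $z$ away. Set $y_t:=(1+t)x-tz$; affinity of $Q$ gives $Q(y_t)=(1+t)Q(x)-tQ(z)$, which in particular vanishes on $\textup{range}(P)$. On $\textup{range}(I-P)$, however, $Q(x)$ is strictly positive, so $Q(x)\ge\lambda(I-P)$ for its smallest nonzero eigenvalue $\lambda>0$, while $Q(z)\le\|Q(z)\|(I-P)$ by the previous step. Hence for all sufficiently small $t>0$,
\[
Q(y_t)\ge\bigl((1+t)\lambda-t\|Q(z)\|\bigr)(I-P)\ge 0,
\]
so $y_t\in K$. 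Writing $x=\tfrac{1}{1+t}y_t+\tfrac{t}{1+t}z$ expresses $x$ as a proper convex combination of two points of $K$; extremality forces $y_t=z$, and since $y_t-z=(1+t)(x-z)$ this gives $z=x$.

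The only nontrivial step is the positivity of $Q(y_t)$, and its moral content is simple: the hypothesis $PQ(z)P=0$ combined with $Q(z)\ge 0$ promotes $\ker Q(x)$ into $\ker Q(z)$, which is precisely what allows the small perturbation past $x$ (away from $z$) to remain inside $K$ and thereby contradict extremality unless $z=x$.
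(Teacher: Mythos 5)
Your argument is correct and takes a genuinely different — and more elementary — route than the paper. The paper proceeds structurally: it shows that $F_x=\{z\in K;\ PQ(z)P=0\}$ is a face of $K$, uses the affine bijection $L(z)=Q(z)$ onto $\textup{Sym}_m^+\cap C$ (whose injectivity relies on compactness of $K$), and then invokes the classification of faces of $\textup{Sym}_m^+$ together with a result on faces of spectrahedral slices to identify $F_x$ with $\textup{face}_K(x)=\{x\}$. You instead argue directly from extremality: after upgrading $PQ(z)P=0$ to $Q(z)P=0$ via $PQ(z)P=(Q(z)^{1/2}P)^*(Q(z)^{1/2}P)$, you extend the segment from $z$ slightly past $x$, using the spectral gap of $Q(x)$ on $(I-P)\mathbb{R}^m$ to keep $Q(y_t)\ge 0$ for small $t>0$, and then extremality of $x$ as a proper convex combination of $y_t$ and $z$ forces $z=x$. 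This avoids the facial-structure machinery and the external references entirely, and in fact does not use compactness of $K$ at all. The only small point worth flagging: the bound $Q(x)\ge\lambda(I-P)$ implicitly assumes $Q(x)\neq 0$ so that a smallest nonzero eigenvalue $\lambda>0$ exists; when $Q(x)=0$ one has $P=I$, so $Q(y_t)=-tQ(z)=0$ trivially and the same extremality argument applies, but it would be cleaner to dispose of this degenerate case in a sentence. Overall, your proof is a nice simplification of the paper's.
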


\begin{proof}
Fix $x\in\textup{ex}(K)$ and let $P$ be the projection onto $\ker(Q(x))$. Set
  \[
    F_x=\{z\in K;\ PQ(z)P=0\}. 
  \]    
Since $Q(z)\ge0$ for $z\in K$, we have 
  \[
    F_x=\{z\in K;\ PQ(z)P=0\}=\{z\in K;\ \ker(Q(x))\subset \ker(Q(z))\}.
  \]
We next show that $F_x$ is a face of $K$. Let $z_1, z_2\in K, \omega\in F_x, s\in (0,1)$ such that $\omega= sz_1+(1-s)z_2\in F_x$. Then, using that $Q(\cdot)$ is affine,
  \[ 
    0=PQ(\omega)P=sPQ(z_1)P+(1-s)PQ(z_2)P.
  \]
Both summands are positive semidefinite, hence each must be $0$, i.e., $z_1, z_2\in F_x$. Thus $F_x$ is a face of $K$.\\
 Let 
  \[
    C=A_0+\textup{span}\{A_1,\dots,A_k\},
  \]    
an affine subspace of $\textup{Sym}^m$, and define
  \[
    L:K\to\textup{Sym}_m^+, z\mapsto Q(z).
  \]
Then $L$ is affine and $L(K)=\textup{Sym}_m^+\cap C$. Moreover, $L$ is injective (this follows from the assumption that $K$ is compact).\\
Since $L$ is an affine bijection from $K$ to $L(K)$, it maps faces to faces and preserves smallest faces. In particular, $L(F_x)$ is a face of $L(K)$ and
  \[
    L(\textup{face}_K(x))=\textup{face}_{L(K)}(Q(x)),
  \]
where $\textup{face}_X(Y)$ denotes the smallest face of $X$ containing $Y$.\\
It is well-known (see \cite[Proposition 10.1.2]{FrMo}) that all faces of $\textup{Sym}_+^m$ are of the form 
  \[
    \{A\in\textup{Sym}^+_m;\ U\subset\ker(A)\}
  \]
for some subspace $U\subset\mathbb{R}^m$. Hence
  \[
    \textup{face}_{\textup{Sym}_m^+}(Q(x))=\{A\in\textup{Sym}_m^+;\ \ker(Q(x))\subset\ker(A)\}.
  \]
By \cite[Proposition 1]{MaSt},
  \[
    \textup{face}_{\textup{Sym}_m^+\cap C}(Q(x))=\textup{face}_{\textup{Sym}_m^+}(Q(x))\cap C.
  \] 
Since $\textup{Sym}_m^+\cap C=L(K)$, it follows that
  \[
    \begin{split}
      \textup{face}_{L(K)}(Q(x))&=\textup{face}_{\textup{Sym}_m^+}(Q(x))\cap C=\{A\in \textup{Sym}_m^+;\ \ker(Q(x))\subset\ker(A)\}\cap C\\
      &=\{A\in L(K);\ \ker(Q(x))\subset\ker(A)\}\\
      &=L(\{z\in K;\ \ker(Q(x))\subset\ker(Q(z))\})=L(F_x).
    \end{split}
  \]
Therefore,
  \[
    L(\textup{face}_K(x))=\textup{face}_{L(K)}(Q(x))=L(F_x). 
  \]    
By injectivity of $L$ we conclude that $\textup{face}_K(x)=F_x$. Since $x$ is an extreme point of $K$, we have $\textup{face}_K(x)=\{x\}$ and hence $F_x=\{x\}$, as claimed. 
\end{proof}

Next we show that each $K_i$ is \textit{locally closed} in $\mathbb{R}^k$, i.e., the intersection of a closed and an open subset of $\mathbb{R}^k$.

\begin{Lemma}\label{Lem: loc clo}
For each $i\in\{1,\dots,m\}$ the set
  \[
    K_i=\{z\in\partial K;\ \dim\ker(Q(z))=i\}
  \]
is locally closed in $\mathbb{R}^k$. In particular, for every $x\in K_i$ there exists a closed set $F\subset\mathbb{R}^k$ such that $x\in F\subset K_i$, and there exists a neighborhood $U$ of $x$ such that $U\cap K_i\subset F$.
\end{Lemma}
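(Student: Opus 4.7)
The plan is to reduce the claim to the lower semicontinuity of the rank of the continuous matrix-valued map $Q$. For any continuous $M:\mathbb{R}^k\to M_m$ and any integer $r$, the locus $\{z:\textup{rank}(M(z))\ge r\}$ is open, since it coincides with the set on which some $r\times r$ minor (a continuous function of $z$) does not vanish. Applied to $Q$ this yields that
\[
    O_i:=\{z\in\mathbb{R}^k:\ \dim\ker Q(z)\le i\}
\]
is open, and
\[
    C_i:=\{z\in\mathbb{R}^k:\ \dim\ker Q(z)\ge i\}
\]
is closed.

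With these sets in hand, I would simply write
\[
    K_i=(\partial K\cap C_i)\cap O_i,
\]
displaying $K_i$ as the intersection of the closed set $\partial K\cap C_i$ with the open set $O_i$, which is exactly the definition of being locally closed in $\mathbb{R}^k$.

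For the \emph{in particular} part, given $x\in K_i\subset O_i$, I would pick an open ball $U$ around $x$ small enough that $\overline{U}\subset O_i$, which is possible because $O_i$ is open. Setting $F:=\partial K\cap C_i\cap\overline{U}$, this $F$ is closed as an intersection of closed sets, contains $x$, and is contained in $K_i$ because $\overline{U}\subset O_i$ forces every point of $F$ to satisfy $\dim\ker Q(z)=i$; the inclusion $U\cap K_i\subset F$ then holds by construction. No genuine obstacle arises: the entire substance of the proof is the lower semicontinuity of rank, and the only small point requiring care is to shrink $U$ so that $\overline{U}\subset O_i$, not merely $U\subset O_i$, so that the closure in the definition of $F$ stays inside $O_i$ and hence $F\subset K_i$.
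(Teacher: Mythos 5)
Your proof is correct and follows essentially the same route as the paper: both express $K_i$ as the intersection of the closed set $\{z\in\partial K:\dim\ker Q(z)\ge i\}$ with an open set cutting away the locus where the kernel is larger, and both produce $F$ by intersecting the closed piece with a small closed ball whose interior stays in the open piece. The only difference is that you explicitly justify the semicontinuity via $r\times r$ minors, whereas the paper asserts closedness of the sets $E_j$ directly from continuity of $Q$; this is a slightly more detailed account of the same underlying fact, not a different argument.
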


\begin{proof}
For $j=1,\dots,m$ set
  \[
    E_j=\bigcup_{n=j}^mK_n=\{z\in\partial K;\ \dim \ker(Q(z))\ge j\}.
  \]
Since $Q(\cdot)$ is continuous and $\partial K$ is closed, each $E_j$ is closed. Thus $K_i$ is the intersection of an open and a closed set:
    \[
      K_i=E_i\cap (\mathbb{R}^k\setminus E_{i+1}),
  \]
implying that $K_i$ is locally closed.\\ 
For the additional claim, fix $x\in K_i$ and let $\epsilon>0$ such that $B_\epsilon(x)\subset (\mathbb{R}^k\setminus E_{i+1})$. Set
  \[
    F=E_i\cap \overline{B_{\epsilon/2}(x)}.
  \]
Then $F$ is closed in $\mathbb{R}^k$, and
  \[
    F\subset E_i\cap(\mathbb{R}^k\setminus E_{i+1})=K_i.
  \]
Moreover, 
  \[
    B_{\epsilon/2}(x)\cap K_i\subset F.
  \]

  \end{proof}

\begin{Lemma}\label{Lem: cont gamma}
For every $i\in\{1,\dots,m\}, x\in K_i\cap\textup{ex}(K)$, and $y\in K\setminus\{x\}$, there exists an open set $U\subset\mathbb{R}^k$ with $x\in U$ and a continuous map $\gamma:U\cap K_i\to\mathbb{R}^m$ such that
  \begin{enumerate}[(i)]
    \item $\gamma(z)\in\ker(Q(z))$ for all $z\in U\cap K_i$,
    \item $\|\gamma(z)\|_2=1$ for all $z\in U\cap K_i$,
    \item $0< \langle Q(y)\gamma(z),\gamma(\omega)\rangle$ for all $\omega, z\in U\cap K_i$.
  \end{enumerate}
\end{Lemma}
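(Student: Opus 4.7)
The plan is to choose a unit vector $v\in\ker Q(x)$ with $\langle Q(y)v,v\rangle>0$ and then propagate it continuously to nearby points of $K_i$ by projecting onto $\ker Q(z)$. Let $P$ denote the projection onto $\ker Q(x)$. By Lemma \ref{Lem: empty ker}, the hypothesis $y\in K\setminus\{x\}$ gives $PQ(y)P\ne 0$; since $Q(y)\ge 0$, the compression $PQ(y)P$ is a nonzero positive semidefinite operator supported in $\ker Q(x)$, hence admits a unit eigenvector $v\in\ker Q(x)$ (real, since $PQ(y)P$ is real symmetric) with $\langle Q(y)v,v\rangle=\langle PQ(y)Pv,v\rangle>0$.

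Next I would construct a continuous family of kernel projections near $x$. Choose $\lambda>0$ strictly smaller than every nonzero eigenvalue of $Q(x)$, and a continuous $\psi:\mathbb{R}\to\mathbb{R}$ with $\psi(0)=1$ and $\psi(t)=0$ for $t\ge\lambda$. By continuity of the eigenvalues of a symmetric matrix in its entries together with continuity of $Q$, there exists an open neighborhood $U\subset\mathbb{R}^k$ of $x$ such that for every $z\in U$, the matrix $Q(z)$ has exactly $i$ eigenvalues (counted with multiplicity) in $(-\lambda/2,\lambda/2)$ and all others strictly above $\lambda$. For $z\in U\cap K_i$, positive semidefiniteness of $Q(z)$ together with $\dim\ker Q(z)=i$ forces those $i$ small eigenvalues to vanish; consequently the continuous functional calculus element $P(z):=\psi(Q(z))$ coincides with the orthogonal projection onto $\ker Q(z)$, and $z\mapsto P(z)$ is continuous on $U$.

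Finally, set $\gamma(z):=P(z)v/\|P(z)v\|$. Since $P(x)v=v\ne 0$, after shrinking $U$ we may assume $P(z)v\ne 0$ throughout $U\cap K_i$, so $\gamma$ is continuous there and (i), (ii) hold by construction. For (iii), the map $(z,\omega)\mapsto\langle Q(y)\gamma(z),\gamma(\omega)\rangle$ is continuous on $(U\cap K_i)^2$ with value $\langle Q(y)v,v\rangle>0$ at $(x,x)$, and a further shrinking of $U$ secures strict positivity throughout. The only delicate point is obtaining the continuous kernel projection: since $\dim\ker Q(z)$ may jump as $z$ leaves the stratum $K_i$, one must exploit the constancy of this dimension along $K_i$ together with the spectral gap around $0$ to identify $\psi(Q(z))$ with the kernel projection.
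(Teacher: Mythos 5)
Your proof is correct and follows essentially the same route as the paper: invoke Lemma~\ref{Lem: empty ker} to obtain a unit vector $v\in\ker Q(x)$ with $\langle Q(y)v,v\rangle>0$, build a continuous family of projections onto the low part of the spectrum using the gap around $0$ at $x$, observe that on $K_i$ this projection equals the kernel projection, propagate $v$ by $z\mapsto P(z)v$, shrink and normalize. The only difference is cosmetic: you realize the continuous projection via functional calculus $\psi(Q(z))$ with a bump function, whereas the paper uses the Riesz contour integral $\tfrac{1}{2\pi i}\int_{|w|=\varepsilon}(wI-Q(z))^{-1}\,dw$; these produce the same spectral projection and enjoy the same continuity, so the arguments are interchangeable.
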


\begin{proof}
We know that if $K_m\neq\emptyset$, then $K=\{z\}$ for some $z\in\mathbb{R}^k$ and there is nothing to show. Hence we may assume $K_m=\emptyset$. \\
Fix $i\in\{1,\dots,m-1\}$ and let $x\in K_i\cap\textup{ex}(K)$.
Write $\lambda_1(z)\le\cdots\le\lambda_m(z)$ for the eigenvalues of $Q(z)$.
Since $Q(\cdot)$ is  continuous and symmetric, the functions
$z\mapsto\lambda_k(z)$ are continuous, see \cite[Section 1.3.3]{Tao}. Because $x\in K_i$ and $Q(x)\ge0$, we have
\[
  \lambda_1(x)=\cdots=\lambda_i(x)=0,\qquad \lambda_{i+1}(x)>0.
\]
Choose $\varepsilon>0$ with $\varepsilon<\lambda_{i+1}(x)$.
By continuity, there is an open neighborhood
$U_0$ of $x$ such that for all $z\in U_0$ we have
\[
  |\lambda_i(z)|<\varepsilon
  \quad\text{and}\quad
  \lambda_{i+1}(z)>\varepsilon.
\]
In particular, $0$ is separated from the rest of the spectrum of $Q(z)$ for every
$z\in U_0$. Hence the Riesz spectral projection
\[
  P(z)=
  \frac{1}{2\pi i}\int_{|w|=\varepsilon}(wI-Q(z))^{-1}\,dw
\]
is well-defined and depends continuously on $z\in U_0\cap K_i$. Moreover, for $z\in U_0\cap K_i$, we have
\[
  \mathrm{im}\,P(z)=\ker(Q(z)),\qquad \operatorname{rank}P(z)=i,
\]
since in this case the only eigenvalue of $Q(z)$ in the disk $B_\epsilon(0)$ is $0$ with multiplicity $i$.\\
Let $P$ be the projection onto $\ker(Q(x))$. By Lemma \ref{Lem: empty ker}, we have $PQ(y)P\neq 0$. Thus there exists a $0\neq v\in\ker(Q(x))$ such that
  \[
    0<\langle Q(y)v,v\rangle.
  \]
Define
\[
  g(z)=P(z)v,\qquad z\in U_0.
\]
Then $g$ is continuous and $g(x)=P(x)v=v\neq0$. Therefore, there exists an open neighborhood $U\subset U_0$ of $x$ such that $g(z)\neq0$ for all $z\in U$ and
  \[
    0<\langle Q(y)g(z),g(\omega)\rangle\qquad (z,\omega\in U),
  \]
by continuity of $g$.\\
Finally, for $z\in U\cap K_i$ set
  \[
    \gamma(z)=\tfrac{g(z)}{\|g(z)\|_2}.
  \]
Then $\gamma$ is continuous on $U\cap K_i$, satisfies $\|\gamma(z)\|_2=1$, and since $g(z)\in\textup{im} P(z)=\ker(Q(z))$ for $z\in U\cap K_i$, we obtain $\gamma(z)\in\ker(Q(z))$. Moreover, for all $z, \omega\in U\cap K_i$,
  \[
    \langle Q(y)\gamma(z),\gamma(\omega)\rangle=\frac{\langle Q(y)g(z),g(\omega)\rangle}{\|g(z)\|_2\|g(\omega)\|_2} > 0
  \]
\end{proof}

\begin{Lemma}\label{Lem: cont gamma 2}
For every $i\in\{1,\dots,m\}, x\in K_i\cap\textup{ex}(K)$ and $y\in K\setminus\{x\}$, there exists a subset $F\subset K_i$ which is closed in $\mathbb{R}^k$ and a neighborhood $U$ of $x$ such that $U\cap K_i\subset F$, and a continuous map $\gamma:F\to\mathbb{R}^m$ such that
  \begin{enumerate}[(1)]
    \item $\gamma(z)\in\ker(Q(z))$ for all $z\in F$,
    \item $\|\gamma(z)\|_2=1$ for all $z\in F$,
    \item $0<\textup{inf}_{\omega, z\in F} k_y^\gamma(\omega,z)$.
  \end{enumerate}
\end{Lemma}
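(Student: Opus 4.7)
The plan is to combine Lemma \ref{Lem: cont gamma} with Lemma \ref{Lem: loc clo} and then localize around $x$ by intersecting with a small closed ball so that compactness promotes pointwise positivity of $k_y^\gamma$ to a uniform positive lower bound.

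First I would invoke Lemma \ref{Lem: cont gamma} applied to the given $x\in K_i\cap\textup{ex}(K)$ and $y\in K\setminus\{x\}$. This yields an open neighborhood $U_0$ of $x$ and a continuous map $\gamma:U_0\cap K_i\to\mathbb{R}^m$ satisfying conditions (i), (ii), (iii) of that lemma, in particular
  \[
    \langle Q(y)\gamma(z),\gamma(\omega)\rangle=k_y^\gamma(\omega,z)>0\qquad \text{for all } \omega,z\in U_0\cap K_i.
  \]
Next, Lemma \ref{Lem: loc clo} applied to $x\in K_i$ produces a closed set $F_0\subset\mathbb{R}^k$ with $x\in F_0\subset K_i$, together with an open neighborhood $V_0$ of $x$ satisfying $V_0\cap K_i\subset F_0$.

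I would then choose $r>0$ small enough so that the closed ball $\overline{B_r(x)}$ is contained in $U_0\cap V_0$, and define
  \[
    F=\overline{B_r(x)}\cap F_0,\qquad U=B_r(x)\cap V_0.
  \]
Then $F$ is closed in $\mathbb{R}^k$ (intersection of closed sets) and bounded, hence compact. Since $F\subset F_0\subset K_i$, we have $F\subset U_0\cap K_i$, so the restriction of $\gamma$ to $F$ is a well-defined continuous map with values in $\mathbb{R}^m$ satisfying (1) and (2). For the inclusion $U\cap K_i\subset F$: any $z\in U\cap K_i$ lies in $V_0\cap K_i\subset F_0$ and in $B_r(x)\subset\overline{B_r(x)}$, so $z\in F$.

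The remaining point is property (3). The function
  \[
    (\omega,z)\mapsto k_y^\gamma(\omega,z)=\gamma(z)^*Q(y)\gamma(\omega)
  \]
is continuous on $F\times F$ because $\gamma$ is continuous there and $Q(y)$ is a fixed matrix. By (iii) of Lemma \ref{Lem: cont gamma} it is strictly positive at every point of $F\times F\subset(U_0\cap K_i)\times(U_0\cap K_i)$. Since $F\times F$ is compact, the infimum is attained and therefore strictly positive, giving (3). There is no serious obstacle here; the only thing to be careful about is to shrink the neighborhood before taking closure so that $F$ remains inside $K_i$ (which is exactly what Lemma \ref{Lem: loc clo} is designed to allow) and inside the domain of the continuous $\gamma$ from Lemma \ref{Lem: cont gamma}.
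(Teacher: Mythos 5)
Your proof is correct and follows essentially the same route as the paper: invoke Lemma \ref{Lem: cont gamma} for the continuous $\gamma$, Lemma \ref{Lem: loc clo} for the locally closed structure of $K_i$, intersect with a small closed ball around $x$ to produce the compact set $F$, and use compactness to promote pointwise positivity of $k_y^\gamma$ to a uniform lower bound. The only cosmetic difference is that you shrink the ball to lie in $U_0\cap V_0$ while the paper only requires it to lie in the domain of $\gamma$; both yield the same conclusion.
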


\begin{proof}
Let $U$ and $\gamma:U\cap K_i\to\mathbb{R}^m$ be as in Lemma \ref{Lem: cont gamma}. Choose $\epsilon>0$ such that $\overline{B_\epsilon(x)}\subset U$. \\
By Lemma \ref{Lem: loc clo} there exists a closed set $\tilde{F}\subset\mathbb{R}^k$ and a neighborhood $\tilde{U}$ of $x$ such that $\tilde{F}\subset K_i$ and $\tilde{U}\cap K_i\subset\tilde{F}$.\\ 
Set $F=\overline{B_\epsilon(x)}\cap\tilde{F}$. Then $\tilde{U}\cap B_\epsilon(x)$ is a neighobrhood of $x$, and 
  \[
    (\tilde{U}\cap B_\epsilon(x)\cap K_i)\subset F. 
  \]    
Moreover, $\gamma|_F$ is well-defined, continuous and satisfies $(1)$ and $(2)$. \\
For $(3)$, note that $F\subset U\cap K_i$, so Lemma \ref{Lem: cont gamma} yields
  \[
    k_y^\gamma(\omega,z)=\langle Q(y)\gamma(\omega),\gamma(z)\rangle>0
  \]
for all $\omega, z\in F$. The map $(\omega,z)\mapsto k^\gamma_y(\omega,z)$ is continuous, and $F\times F$ is compact. Hence the infimum over $F\times F$ is attained and strictly positive, i.e.,
  \[
    \inf_{\omega, z\in F}k^\gamma_y(\omega,z)>0.
  \]
\end{proof}

Let $H$ be a Hilbert space, $\pi:C(\textup{ex}(K))\to\mathcal{B}(H)$ be a unital $*$-homomorphism. Let $A(K)$ be the continuous affine functions on $K$, and $\phi:C(\textup{ex}(K))\to\mathcal{B}(H)$ be a unital completely positive map such that $\pi(f)=\phi(f)$ for all $f\in A(K)$. For $F\subset \mathbb{R}^k$ Borel, we define $\pi(\chi_F)$ and $\phi(\chi_F)$ via the unique positive operator valued measure corresponding to $\pi$ respectively to $\phi$.\\
Recall that we assume that the spectrahedron $K$ is compact.

\begin{Lemma}\label{Lem: Main}
Assume that $\textup{Int}(K)\neq\emptyset$ and let $\pi$ and $\phi$ be as above. Let $ F\subset \partial K$ be closed, and assume that $\gamma:F\to\mathbb{R}^m$ is continuous with $\gamma(z)\in\ker(Q(z))$ and $\|\gamma(z)\|_2=1$ for all $z\in F$. Let $y\in \textup{ex}(K)\setminus F$ such that 
  \[
   \inf_{\omega,z \in F} k^\gamma_y(\omega,z)>0.
  \]
Then there exists a neighborhood $U$ of $y$ in $\mathbb{R}^k$ such that 
  \[
    \pi(\chi_F)\phi(\chi_U)\pi(\chi_F)=0.
  \]
  \end{Lemma}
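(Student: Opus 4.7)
The strategy is a Hadamard-product domination. I would choose $t_1,\dots,t_n\in F$ and form $A(z)_{i,j}:=k^\gamma_z(t_i,t_j)=\gamma(t_j)^*Q(z)\gamma(t_i)$; then $A\in M_n(A(K))$ is matrix-positive on $K$, and $A(t_l)e_l=0$ since $\gamma(t_l)\in\ker Q(t_l)$. The hypothesis $c:=\inf_{F\times F}k^\gamma_y>0$, together with compactness of $F$ and continuity of $\gamma,Q$, gives entries of $A(y)$ in $[c,b]$ for some $b<\infty$.

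Next, I would apply Lemma \ref{Lem: Perron Bound} to $A(y)$ to obtain Perron eigenvalue $\lambda$ and eigenvector $x$ with the quantitative bounds. The auxiliary scalar positive matrix $B\in M_n$ with $B_{i,j}:=2/(\lambda x_ix_j)$ is rank-one positive semidefinite and, by the calculation $\lambda(xx^*)\circ B=2J_n$ (with $J_n$ the all-ones matrix) together with $A(y)\succeq\lambda xx^*$, yields $A(y)\circ B\succeq 2J_n$. Choosing the $\gamma(t_l)$'s linearly independent modulo $\ker Q(y)$ (feasible by Lemma \ref{Lem: empty ker} combined with the hypothesis) makes $A(y)$ positive definite, so that $A(y)\circ B-J_n$ is strictly positive on $\mathbf{1}^\perp$ with a quantitative margin controlled by the Perron bounds. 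By continuity of $A(\cdot)$, this operator inequality persists for $z$ in a small enough neighborhood $U$ of $y$ in $\mathbb{R}^k$; hence for every $z\in\textup{ex}(K)$,
\[
\chi_U(z)\cdot J_n\;\preceq\; A(z)\circ B.
\]

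Then, applying the u.c.p.\ amplification $\phi^{(n)}$: since $A\in M_n(A(K))$ and $\phi=\pi$ on $A(K)$, we have $\phi^{(n)}(A)=\pi^{(n)}(A)$, and Hadamard with the scalar matrix $B$ commutes with $\phi^{(n)}$ entrywise, so $\phi(\chi_U)J_n\preceq\pi^{(n)}(A)\circ B$ in $M_n(\mathcal{B}(H))$. Conjugating by $I_n\otimes\pi(\chi_F)$ and using the $*$-homomorphism property $\pi(\chi_F)\pi(f)\pi(\chi_F)=\pi(\chi_F f)$ turns the right-hand side into a matrix with entries $B_{i,j}\,\pi(\chi_F A_{i,j}|_F)$. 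Testing against $\mathbf{1}\otimes\xi$ and exploiting $A(t_l)e_l=0$, together with the Cauchy--Schwarz estimate $|A(z)_{i,j}|\le\sqrt{A(z)_{i,i}A(z)_{j,j}}$ for positive matrices and a sufficiently fine $t_l$-net in $F$, forces $\langle\pi(\chi_F)\phi(\chi_U)\pi(\chi_F)\xi,\xi\rangle=0$ for every $\xi\in H$, giving the claim.

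Main difficulty: propagating the Hadamard domination $A(y)\circ B\succeq J_n$ from $y$ alone to a full neighborhood is the technical core, because the rank-one structure of $J_n$ leaves no slack on $\mathbf{1}^\perp$ for generic perturbations. The quantitative Perron--Frobenius bounds of Lemma \ref{Lem: Perron Bound}, together with the Reams-style construction of $B$ from \cite{Rea}, provide the strict positive-definiteness margin on which the continuity argument depends; the remaining steps are routine applications of positivity and the $*$-homomorphism property of $\pi$.
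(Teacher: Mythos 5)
Your overall architecture matches the paper's (kernel matrix $A(z)=(k^\gamma_z(t_i,t_j))$, quantitative Perron bounds, a Schur/Hadamard domination of the all-ones matrix $J_n$, and a fine net in $F$ exploiting the vanishing diagonals $k^\gamma_{t_l}(t_l,t_l)=0$), but the core step — propagating $A(y)\circ B\succeq 2J_n$ to a neighborhood of $y$ — has a genuine gap. Your margin comes from asking that $A(y)$ be positive definite, i.e.\ that the vectors $Q(y)^{1/2}\gamma(t_1),\dots,Q(y)^{1/2}\gamma(t_n)\in\mathbb{R}^m$ be linearly independent; this forces $n\le m$. But the final step requires the $t_l$ to form a net fine enough that $\sup_{z\in F_l}k^\gamma_z(t_l,t_l)\le\epsilon$ for a partition $F=\bigsqcup F_l$, and for infinite $F$ the number of points $n$ must grow without bound as $\epsilon\to 0$, so $A(y)$ is necessarily singular and $A(y)\circ B-J_n$ has no strict margin on which a continuity argument can act (as you yourself observe, $J_n$ leaves no slack on $\mathbf{1}^\perp$). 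The paper circumvents this entirely: it never perturbs around $y$ inside $K$, but instead picks $\omega_1\in\mathrm{Int}(K)$ and $\omega_2\notin K$ with $y=s\omega_1+(1-s)\omega_2$, splits $A(\omega_2)=A^+(\omega_2)+A^-(\omega_2)$ along the spectral decomposition of $Q(\omega_2)$, applies the Perron bounds to $A^+(\omega_2)$ and Reams' theorem to the Hadamard inverse $B$ of $\lambda pp^*+A^-(\omega_2)$, and then writes every $x\in U\cap K$ as $s\omega+(1-s)\omega_2$ with $\omega\in\mathrm{Int}(K)$ to get $(1-s)J_n\le B\circ A(x)$ by affinity — a uniform bound with no perturbation theory and no rank condition on $A$. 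Note that your argument never uses the hypothesis $\mathrm{Int}(K)\neq\emptyset$, which is precisely what makes this convexity trick possible; that is a strong signal the route you chose cannot be the intended one.

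A second, more repairable issue is the final compression. Conjugating entrywise by $I_n\otimes\pi(\chi_F)$ and testing against $\mathbf{1}\otimes\xi$ leaves you with $\sum_{i,j}B_{i,j}\langle\pi(\chi_F A_{i,j})\xi,\xi\rangle$, and the off-diagonal terms $\chi_FA_{i,j}$ are only bounded by $\sqrt{\sup_F A_{i,i}\cdot\sup_F A_{j,j}}=O(b)$, not by $\epsilon$: the diagonal $A_{i,i}(z)$ is small only for $z$ in the $i$-th piece of the net, not on all of $F$. You must instead compress by the row operator $(\pi(\chi_{F_1}),\dots,\pi(\chi_{F_n}))$ attached to the disjoint partition $F=\bigsqcup F_i$, so that the $*$-homomorphism property kills all cross terms and only $\pi(\chi_{F_i}(B\circ A)_{i,i})$ survives, each of norm at most $\alpha^{-1}\epsilon$. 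With that fix the conclusion $\|\pi(\chi_F)\phi(\chi_U)\pi(\chi_F)\|\le C\epsilon$ follows, provided the neighborhood $U$ and the constants are independent of $\epsilon$ — which again is guaranteed by the paper's construction of $U$ from $\omega_1,\omega_2,s$ but is not addressed in your sketch.
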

   
\begin{proof}
The proof is divided into five steps:\\
We first cover $F$ by finitely many sets $F_1,\dots, F_n$ on which the diagonal values $k_z^\gamma(x_i,x_i)$ are uniformly small (Step~1), and encode the kernel on this finite set in the matrix-valued map $A(z)=(k_z^\gamma(x_i,x_j))_{1\le i,j\le n}$ (Step~2).\\
In Step~3 we use the standing assumption and continuity to decompose $A=A^++A^-$, where the negative part $A^-$ is small in norm. \\
Next, we choose a neighborhood $U$ of $y$ via a suitable convex combination with a point in $\mathrm{Int}(K)$ (Step~4).\\
Finally, in Step~5 we apply Perron--Frobenius theory to produce a positive semidefinite matrix $B$ such that 
  \[
    (\chi_{U\cap K})_{1\le i,j\le n}\le \frac{1}{1-s}\,B\circ A|_K. 
  \]    
Applying $\phi$ and compressing by $(\pi(\chi_{F_1}),\dots,\pi(\chi_{F_n}))$ yields an estimate of the form $\|\pi(\chi_F)\phi(\chi_U)\pi(\chi_F)\|\le C\,\varepsilon$; letting $\varepsilon\to0$ gives the desired conclusion.

\medskip
So let us start by fixing $\epsilon>0$.\\
\textbf{Step 1}.
Retaining the notation introduced in the statement of the theorem, set for $x\in F$
  \[
    U_x=\{z\in K;\ k^\gamma_z(x,x)<\epsilon\}.
  \]
Each $U_x$ is open in $K$ since $z\mapsto k_z^\gamma(x,x)=\gamma(x)^*Q(z)\gamma(x)$ is continuous. Moreover, $F\subset \bigcup_{x\in F}U_x$ because $k^\gamma_x(x,x)=0$ for $x\in F$. As $K$ is compact and $F$ closed, also $F$ is compact, so there exist $x_1,\dots,x_n\in F$ such that
   \[
     F\subset\bigcup_{i=1}^n U_{x_i}.
   \]
Define Borel sets
  \[
    F_1=F\cap U_{x_1},\qquad F_i=F\cap \left(U_{x_i}\setminus\bigcup_{j<i} U_{x_j}\right)\qquad 2\le i\le n.
  \]
Then $F_1,\dots,F_n$ are disjoint Borel sets, $F=\bigcup_{i=1}^nF_i$ and
   \[
     \sup_{z\in F_i} k^\gamma_z(x_i,x_i)\le\epsilon, \qquad 1\le i\le n.
    \]
\textbf{Step 2}.
Consider the affine matrix-valued map
  \[
    A:\mathbb{R}^k\to M_n, z\mapsto (k_{z}^\gamma(x_i,x_j))_{1\le i,j\le n}.
  \]
Since $k^\gamma_z(\cdot,\cdot)$ is a positive semidefinite kernel on $F\times F$ for all $z\in K$, the restriction $A|_K$ is a positive element of $M_n(A(K))$.\\
\textbf{Step 3}.
By assumption and since $\|\gamma(z)\|_2=1$ for all $z\in F$, 
     \[
          0<\inf_{\omega, z\in F}k^\gamma_y(\omega,z)\le\sup_{\omega, z\in F}k^\gamma_y(\omega,z)\le\|Q(y)\|<\infty.
     \]
By continuity of $\gamma$ and compactness of $F$, there exist a neighborhood $V$ of $y$ and numbers $0<c\le b$ such that for all $z\in V$:
      \[
          0<c\le\inf_{\omega, x\in F}k^\gamma_z(\omega,x)\le\sup_{\omega, x\in F}k^\gamma_z(\omega,x)\le b<\infty.
     \]
Since $Q(\cdot)$ is continuous and $Q(y)\ge0$, it follows that for every $\alpha>0$ there exists a neighborhood $\tilde{V}_\alpha$ of $y$ such that for all $z\in\tilde{V}_\alpha$:
  \[
    \left\|\sum_{\lambda\in\sigma(Q(z)),\ \lambda<0}\lambda P_\lambda(z)\right\|<\alpha,
  \]
where $P_\lambda(z)$ denotes the spectral projection of $Q(z)$ corresponding to $\lambda$. For later computational reasons, we now fix 
  \[
    \alpha:=\tfrac{c^3}{16(b+c)^2}
  \]
and write $\tilde{V}=\tilde{V}_\alpha$. Note that since $\|\gamma(\cdot)\|_2=1$, we have for all $z\in \tilde{V}$ and $x,\omega\in F$:
   \begin{equation}
  \label{Eq: cont gamma 3}
   \left|\gamma(\omega)^*\left[\sum_{\lambda\in\sigma(Q(z)), \lambda<0}\lambda P_\lambda(z)\right]\gamma(x)\right|\le\tfrac{c^3}{16(b+c)^2}=\alpha.
   \end{equation}
For $z\in\tilde{V}$ write the spectral decomposition
  \[
    X(z)=\sum_{\lambda\in\sigma(Q(z)), \lambda<0} \lambda P_\lambda(z), \qquad  Y(z)=\sum_{\lambda\in\sigma(Q(z)), \lambda>0} \lambda P_\lambda(z),
  \]
so that $X(z)\le0\le Y(z)$ and $Q(z)=X(z)+Y(z)$. Let $\Gamma_n:\mathbb{R}^n\to\mathbb{R}^{mn}$ be the isometry
  \[
   \Gamma_n(t_1,\dots,t_n)=(t_1\gamma(x_1),\dots,t_n\gamma(x_n)).
  \]
Then for $z\in\tilde{V}$ and $(\textbf{1})=(1)_{1\le i,j\le n}$,
  \begin{equation*}
    \begin{split}
      A(z)&=(\gamma(x_j)^*Q(z)\gamma(x_i))_{1\le i,j\le n}=\Gamma_n^*[(\textbf{1})\otimes Q(z)]\Gamma_n\\
       &=\Gamma_n^*\left[(\textbf{1})\otimes X(z)\right]\Gamma_n+\Gamma_n^*\left[(\textbf{1})\otimes Y(z)\right]\Gamma_n\\
       &=:A^-(z)+A^+(z)
     \end{split}
  \end{equation*}
For $z\in V\cap\tilde{V}$, using the bounds on $k^\gamma_z$ and \cref{Eq: cont gamma 3}, we obtain for all $i,j$:
   \[
     \begin{split}
       (A^+(z))_{i,j}=(\Gamma_n^*\left[(\textbf{1})\otimes Y(z)\right]\Gamma_n)_{i,j}&=\gamma(x_j)^*Q(z)\gamma(x_i)-\left(\Gamma_n^*\left[(\textbf{1})\otimes X(z)\right]\Gamma_n\right)_{i,j}\\
       &=k^\gamma_z(x_i,x_j)-\gamma(x_j)^*\left[\sum_{\lambda\in\sigma(Q(z)), \lambda<0} \lambda P_\lambda\right]\gamma(x_i)\\
       &\ge c-\alpha>0,
    \end{split}
  \]
since $\alpha=\tfrac{c^3}{16(b+c)^2}\le \tfrac{c}{2}$, and similarly
  \[
    (A^+(z))_{i,j}=(\Gamma_n^*\left[(\textbf{1})\otimes Y(z)\right]\Gamma_n)_{i,j}\le b+\alpha
  \]
Thus the entries of $A^+(z)=\Gamma_n^*\left[(\textbf{1})\otimes Y(\omega)\right]\Gamma_n$ are between $c-\alpha$ and $b+\alpha$, in particular, strictly positive, and so there exists a Perron eigenvalue $\lambda_z$ and normalized eigenvector $p_z$ of $A^+(z)$ that, by Lemma \ref{Lem: Perron Bound}, satisfy
  \begin{equation}\label{Eq: cont gamma 4}
    (\lambda_z p_z p_z^*)_{i,j}\ge\tfrac{(c-\alpha)^3}{(b+\alpha)^2}=:\tilde{c}.
  \end{equation}
Since $\alpha\le c/2$, we have
  \[
    \tilde{c}\ge\tfrac{(c/2)^3}{(b+c)^2}=\tfrac{c^3}{8(b+c)^2}=2\alpha,
    \]
and therefore, combining \cref{Eq: cont gamma 3} and \cref{Eq: cont gamma 4}:
    \begin{equation}\label{Eq: cont gamma 5}
      (\lambda_z p_z p_z^*+A^-(z))_{i,j}=\left(\lambda_z p_z p_z^*+\Gamma_n^*\left[(\textbf{1})\otimes X(z)\right]\Gamma_n\right)_{i,j}\ge\tilde{c}-\alpha\ge\alpha
  \end{equation}
for all $z\in V\cap\tilde{V}$.\\
\textbf{Step 4}.
Set $\tilde{U}=V\cap\tilde{V}$. Choose $\omega_1\in \tilde{U}\cap \textup{Int}(K)$, which is possible since $\textup{Int}(K)\neq\emptyset$ by assumption and $\tilde{U}$ is a neighborhood of $y$. Since $y\in\partial K$, we can pick $\omega_2\in\tilde{U}\setminus K$ and $s\in(0,1)$ such that
  \[
    y=s\omega_1+(1-s)\omega_2.
  \]
Choose $\delta>0$ such that $B_{\delta}(\omega_1)\subset \textup{Int}(K)$ and $B_{\delta}(y)\subset\tilde{U}$, and define
  \begin{equation}\label{Eq: cont gamma 6}
    U= \{s\omega+(1-s)\omega_2;\ \omega\in B_{\delta}(\omega_1)\}=y+sB_{\delta}(0)=B_{s\delta}(y)\subset\tilde{U}.
  \end{equation}
Then $U$ is an open neighborhood of $y$.\\
\textbf{Step 5}.
Recall that we decomposed $A(\omega_2)$ into (we drop the notational dependence on $\omega_2$) a positive operator $A^+$ and a negative operator $A^-$. Furthermore $A^+$ only has strictly positive entries and admits a Perron eigenvalue $\lambda$ with normalized eigenvector $p$ such that for all $i,j$:
   \[
     \left(\lambda pp^*+ A^-\right)_{i,j}\ge\alpha.
  \]
Hence $\lambda pp^*+A^-$ has strictly positive entries and exactly one strictly positive eigenvalue, since 
  \[
    \langle (\lambda pp^*+A^- )x,x\rangle\le 0
  \]
for all $x\perp p$. Therefore, by \cite[Theorem 2.7]{Rea}, the Hadamard inverse $B$ of $\lambda pp^*+A^-$ is positive semidefinite. Thus, $B\circ A(z)\ge0$ for all $z\in K$. Furthermore, since $A^+-\lambda pp^*\ge0$ and so $B\circ(A^+-\lambda pp^*)\ge0$,
  \[
    (\textbf{1})=B\circ (\lambda pp^*+ A^-)\le B\circ(A^++A^-)= B\circ A(\omega_2).
  \]
Now let $x\in U$. By \cref{Eq: cont gamma 6} we can write $x=s\omega+(1-s)\omega_2$ for some $\omega\in B_{\delta}(\omega_1)\subset\textup{Int}(K)$. Using $B\circ A(\omega)\ge0$ we obtain
  \[
    (1-s)(\textbf{1})\le (1-s)B\circ A(\omega_2)+sB\circ A(\omega)=B\circ A(x).
  \]
 Consequently, 
  \[
    (\chi_{U\cap K})_{1\le i,j\le n}\le\tfrac{1}{1-s} B\circ A|_K
  \]
Applying the u.c.p.\ map $\phi$, using that $\phi(\chi_{U\cap K})=\phi(\chi_U)$, and compressing by $\pi(\chi_F)=\sum_{i=1}^n \pi(\chi_{F_i})$, we obtain
  \[
    \begin{split}
      \pi(\chi_F)\phi(\chi_U)\pi(\chi_F)&=(\pi(\chi_{F_1}),\dots,\pi(\chi_{F_n}))\, \phi((\chi_U)_{1\le i,j\le n}) (\pi(\chi_{F_1}),\dots,\pi(\chi_{F_n}))^*\\
      &\leq(\pi(\chi_{F_1}),\dots,\pi(\chi_{F_n}))\, \phi\!\left(\frac{1}{1-s}B\circ A|_K\right) (\pi(\chi_{F_1}),\dots,\pi(\chi_{F_n}))^*.
    \end{split}
  \]
Since $\phi=\pi$ on $A(K)$, the right-hand side equals
  \[
    \frac{1}{1-s}\,(\pi(\chi_{F_1}),\dots,\pi(\chi_{F_n}))\,
    \pi(B\circ A|_K)\,
    (\pi(\chi_{F_1}),\dots,\pi(\chi_{F_n}))^*
    =
    \frac{1}{1-s}\bigoplus_{i=1}^n \pi\bigl(\chi_{F_i}(B\circ A)_{i,i}\bigr),
  \]
because the sets $F_i$ are disjoint and $\pi$ is a $*$-homomorphism. Taking norms yields
  \[
    \|\pi(\chi_F)\phi(\chi_U)\pi(\chi_F)\|
    \le \frac{1}{1-s}\max_{1\le i\le n}\|\chi_{F_i}(B\circ A)_{i,i}\|_\infty.
  \]
Since $B$ is the Hadamard inverse of $\lambda pp^*+ A^-$ and by \cref{Eq: cont gamma 5} $(\lambda pp^*+ A^-)_{i,i}\ge\alpha$, we have $B_{i,i}\le \alpha^{-1}$, hence 
  \[
    \|\chi_{F_i} (B\circ A)_{i,i}\|_\infty\le \alpha^{-1}\|\chi_{F_i}A_{i,i}\|_\infty.
  \]
But $A_{i,i}(z)=k_z^\gamma(x_i,x_i)$ and by construction $0<\sup_{z\in F_i}k_z^\gamma(x_i,x_i)\le\epsilon$. Therefore
  \[
    \begin{split}       
      \| \pi(\chi_F)\phi(\chi_U)\pi(\chi_F)\|&\le \tfrac{1}{1-s}\max_{1\le i\le n}\|\chi_{F_i} (B\circ A)_{i,i}\|_\infty\\
      &\le \tfrac{1}{\alpha(1-s)}\max_{1\le i\le n}\|\chi_{F_i} (A)_{i,i}\|_\infty\\
      &\le \tfrac{1}{\alpha(1-s)}\epsilon.
    \end{split}
  \]
Since $b, c$, and thus also $\alpha$, $s$ and $U$ are independent of $\epsilon$, we conclude that
  \[
    \| \pi(\chi_F)\phi(\chi_U)\pi(\chi_F)\|=0.
  \]
\end{proof}

The final step in the proof of the hyperrigidity of $A(K)$ is purely measure-theoretic and does not use any spectrahedral
structure. In fact, the underlying separation principle appears to be of independent interest and may well prove useful for hyperrigidity problems beyond the present setting. For this reason, we state the last step in a general form.

\begin{Theorem}\label{Thm: final step}
Let $X$ be a compact metrizable space and let $H$ be a Hilbert space.
Let $\pi:C(X)\to\mathcal{B}(H)$ be a unital $*$-homomorphism and let $\phi:C(X)\to\mathcal{B}(H)$ be a u.c.p.\ map.
Assume that for every pair of distinct points $x,y\in X$ there exist disjoint open neighborhoods $U$ of $x$
and $V$ of $y$ such that
\[
  \pi(\chi_U)\,\phi(\chi_V)\,\pi(\chi_U)=0.
\]
Then $\phi=\pi$ on $C(X)$.
\end{Theorem}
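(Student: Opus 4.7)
The plan is to show that $\pi$ and $\phi$ induce the same Borel POVM on $X$; once that is established, integrating continuous functions yields $\phi=\pi$ on $C(X)$. The argument proceeds in three steps, the first of which I expect to be the main technical step.

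First, I would extend the hypothesis to a statement about disjoint closed sets: if $A,B\subset X$ are disjoint and closed, then $\phi(\chi_B)\pi(\chi_A)=0$. The argument uses two nested finite-cover compactness arguments. For fixed $y\in B$ and each $x\in A$, the hypothesis yields disjoint open neighborhoods $U_{x,y}\ni x$ and $V_{x,y}\ni y$ with $\pi(\chi_{U_{x,y}})\phi(\chi_{V_{x,y}})\pi(\chi_{U_{x,y}})=0$; this is equivalent to $\phi(\chi_{V_{x,y}})^{1/2}\pi(\chi_{U_{x,y}})=0$ since $T^*T=0$ forces $T=0$. Extracting a finite subcover $U_{x_1,y},\dots,U_{x_n,y}$ of $A$ and setting $W_y=\bigcup_i U_{x_i,y}$, $V_y=\bigcap_i V_{x_i,y}$, one uses that the kernel of $\phi(\chi_{V_y})^{1/2}$ is closed and contains each $\pi(\chi_{U_{x_i,y}})H$, hence contains $\pi(\chi_{W_y})H$, so $\phi(\chi_{V_y})\pi(\chi_{W_y})=0$. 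A second compactness step over $B$, together with $\phi(\chi_V)\leq\sum_j\phi(\chi_{V_{y_j}})$ for $V=\bigcup_j V_{y_j}$, produces open $W\supset A$ and $V\supset B$ with $\pi(\chi_W)\phi(\chi_V)\pi(\chi_W)=0$; squeezing between $\pi(\chi_A)\leq\pi(\chi_W)$ and $\phi(\chi_B)\leq\phi(\chi_V)$ gives the claim.

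Second, I would prove $\phi(\chi_A)\pi(\chi_A)=\pi(\chi_A)$ for every closed $A$. Fix $\xi\in H$ and set $\eta=\pi(\chi_A)\xi$. Step 1 gives $\langle\phi(\chi_B)\eta,\eta\rangle=0$ for every closed $B\subset X\setminus A$, so by inner regularity of the Borel measure $\mu_\eta^\phi(\cdot)=\langle\phi(\chi_\cdot)\eta,\eta\rangle$ on the compact metrizable space $X$, $\mu_\eta^\phi(X\setminus A)=0$, hence $\langle\phi(\chi_A)\eta,\eta\rangle=\|\eta\|^2$. Since $0\leq\phi(\chi_A)\leq I$, this forces $\phi(\chi_A)\eta=\eta$, i.e.\ $\phi(\chi_A)\pi(\chi_A)=\pi(\chi_A)$. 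Approximating an open $U$ from inside by closed $B_n\nearrow U$ and passing to the SOT-limit in $\phi(\chi_{B_n})\pi(\chi_{B_n})=\pi(\chi_{B_n})$ extends the identity: $\phi(\chi_U)\pi(\chi_U)=\pi(\chi_U)$ for every open $U$ as well.

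Third, I would bootstrap to equality and conclude. Applying Step 2 to the complement $A^c$ (which is again open or closed) and substituting $\phi(\chi_{A^c})=I-\phi(\chi_A)$, $\pi(\chi_{A^c})=I-\pi(\chi_A)$ yields $\phi(\chi_A)\pi(\chi_{A^c})=0$. Combined with $\phi(\chi_A)\pi(\chi_A)=\pi(\chi_A)$ and $\pi(\chi_A)+\pi(\chi_{A^c})=I$, this gives $\phi(\chi_A)=\pi(\chi_A)$ for every $A$ open or closed. Since finite Borel measures on a compact metric space are determined by their values on open sets, the scalar measures $\mu_{\xi,\xi}^\phi$ and $\mu_{\xi,\xi}^\pi$ coincide; integrating an arbitrary $f\in C(X)$ and polarizing in $\xi$ then yields $\phi=\pi$ on $C(X)$. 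The main obstacle is the two-level compactness bookkeeping of Step 1: carefully combining the pointwise separation hypothesis into a uniform statement about disjoint compact sets while tracking how intersections of $V$-neighborhoods interact with unions of $U$-neighborhoods through the POVMs. Everything after Step 1 is a standard measure-theoretic bootstrap.
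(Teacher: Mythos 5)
Your proposal is correct and follows essentially the same route as the paper: a two-level compactness argument upgrading the pointwise hypothesis to $\phi(\chi_B)\pi(\chi_A)=0$ for disjoint sets, then complementation to get $\phi=\pi$ on indicators, and finally a standard generating-class argument to all of $C(X)$. The only (harmless) variations are that you work with disjoint closed sets and two finite subcovers where the paper uses disjoint open sets with one finite and one Lindel\"of cover, and your intermediate step $\phi(\chi_A)\pi(\chi_A)=\pi(\chi_A)$ via inner regularity of the scalar measures replaces the paper's explicit approximation of $U$ and $U^c$ by disjoint open sets.
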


\begin{proof}
With the above notations, we start by showing that for any disjoint open sets $U, V\subset X$ one has
\begin{equation}\label{Eq: disjoint vanishing}
  \phi(\chi_V)\,\pi(\chi_U)=0
  \qquad\text{and}\qquad
  \pi(\chi_U)\,\phi(\chi_V)=0.
\end{equation}
Fix disjoint open $U, V\subset X$, and let $M\subset U$ be compact. For each $x\in M$ and $y\in V$ choose disjoint open neighborhoods $U_{x,y}$ of $x$ and $V_{x,y}$ of $y$ such that $V_{x,y}\subset V$ and
  \[
    \pi(\chi_{U_{x,y}})\phi(\chi_{V_{x,y}})\pi(\chi_{U_{x,y}})=0.
  \]
Fix $y\in V$. By compactness of $M$ we may cover $M$ by finitely many sets $U_{x_1,y},\dots, U_{x_n,y}$ and set
  \[
    U_y=\bigcup_{i=1}^n U_{x_i,y},\qquad V_y=\bigcap_{i=1}^n V_{x_i,y}.
  \]
Then $M\subset U_y$ and $y\in V_{y}$. Moreover, for each $i$ we have
  \[
    \pi(\chi_{U_{x_i,y}})\phi(\chi_{V_y})\pi(\chi_{U_{x_i,y}})=0,
  \]
since $V_y\subset V_{x_i,y}$. Using \cite[Corollary 1.2]{Br} we obtain
  \[
    0\le\pi(\chi_M)\phi(\chi_{V_y})\pi(\chi_M)\le\pi(\chi_{U_y})\phi(\chi_{V_y})\pi(\chi_{U_y})=0.
  \]
As $\phi(\chi_{V_y})\ge0$, this implies
  \[
  \phi(\chi_{V_y})\pi(\chi_M)=0.
  \]
Now $\{V_y;\ y\in V\}$ is an open cover of $V$. Since $X$ is metrizable, it is Lindel\"of, so there exist $(y_j)_{j=1}^\infty$ in $V$ with $V\subset\bigcup_{j\ge1}V_{y_j}$. Since by choice $V_{y_j}\subset V$ for all $j$, we even have $V=\bigcup_{j\ge1}V_{y_j}$. Let $W_N=\bigcup_{j=1}^N V_{y_j}$. Then $\phi(\chi_{W_N})\to \phi(\chi_V)$ in the WOT and
  \[
    \pi(\chi_M)\phi(\chi_{W_N})\pi(\chi_M)\le\sum_{j=1}^N\pi(\chi_M)\phi(\chi_{V_{y_j}})\pi(\chi_M)=0.
  \]
Thus $\phi(\chi_{W_N})\pi(\chi_M)=0$ and
  \[
    0=\lim_{N\to\infty}\phi(\chi_{W_N})\pi(\chi_M)=\phi(\chi_V)\pi(\chi_M).
  \]
Finally, choose an increasing sequence of compact sets $M_\ell\subset U$ with $\bigcup_\ell M_\ell=U$ (possible since $X$ is compact metrizable). Then
$\pi(\chi_{M_\ell})\to \pi(\chi_U)$ strongly, so passing to the limit yields
  \[
    \phi(\chi_V)\pi(\chi_U)=0. 
  \]    
This proves the first identity in \eqref{Eq: disjoint vanishing}.
The second follows by taking adjoints.\\
We now show that $\phi(\chi_U)=\pi(\chi_U)$ for every open set $U\subset X$.\\
Since $X$ is metrizable, pick open sets $U_1,U_2,\dots\subset U$ with $\overline{U_n}\subset U$ and
$U=\bigcup_{n\ge 1}U_n$. Set $V_n=\bigcup_{j=1}^n U_j$ and $\widetilde V_n=X\setminus \overline{V_n}$.
Then $V_n$ and $\widetilde V_n$ are disjoint open sets, $V_n\uparrow U$, and $\widetilde V_n\downarrow U^c$.
By \eqref{Eq: disjoint vanishing},
  \[
    \phi(\chi_{\widetilde V_n})\,\pi(\chi_{V_n})=0
    \quad\text{and}\quad
    \phi(\chi_{V_n})\,\pi(\chi_{\widetilde V_n})=0
    \qquad (n\ge 1).
  \]
Since
  \[
    \pi(\chi_{\widetilde V_n})\to\pi(\chi_{U^c})
    \quad\text{and}\quad
    \pi(\chi_{V_n})\to\pi(\chi_V)
  \]
in the SOT, and 
  \[
    \phi(\chi_{\widetilde V_n})\to\phi(\chi_{U^c})
    \quad\text{and}\quad
    \phi(\chi_{V_n})\to\phi(\chi_V)
  \]
in the WOT, and using that the product of a WOT convergent sequence with a SOT convergent sequence converges WOT, we get
  \[
    \phi(\chi_{U^c})\pi(\chi_U)=\lim \phi(\chi_{\widetilde V_j})\pi(\chi_{V_j})=0
  \]
and
  \[
    \phi(\chi_U)\pi(\chi_{U^c})=\lim \phi(\chi_{V_j})\pi(\chi_{\widetilde V_j})=0.
  \]
Now
  \[
    \phi(\chi_U)=\phi(\chi_U)\pi(\chi_U)+\phi(\chi_U)\pi(\chi_{U^c})+\phi(\chi_{U^c})\pi(\chi_U)=\pi(\chi_U).
  \]
Finally, it is straightforward to check that 
  \[
     \{F\subset X;\ F\ \textup{Borel},\ \phi(\chi_F)=\pi(\chi_F)\}
  \]
is a Dynkin-system that contains all open sets. By the Dynkin $\pi-\lambda$ theorem, we obtain that $\pi(\chi_F)=\phi(\chi_F)$ for every Borel set $F$. Thus $\pi=\phi$.
\end{proof}

\begin{Theorem}\label{Thm: main}
Let $K\subset\mathbb{R}^k$ be a compact spectrahedron such that $\textup{ex}(K)$ is closed. Then $A(K)$ is hyperrigid in $C(\textup{ex}(K))$.
\end{Theorem}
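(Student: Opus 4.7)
The plan is to verify the hypothesis of Theorem~\ref{Thm: final step} applied with $X=\textup{ex}(K)$, which is compact and metrizable as a closed subset of $K$. After fixing a unital $*$-homomorphism $\pi:C(\textup{ex}(K))\to\mathcal{B}(H)$ and a u.c.p.\ extension $\phi$ with $\phi|_{A(K)}=\pi|_{A(K)}$, the task is, for each pair of distinct $x,y\in\textup{ex}(K)$, to produce disjoint open neighborhoods $U\ni x$ and $V\ni y$ in $\textup{ex}(K)$ with $\pi(\chi_U)\phi(\chi_V)\pi(\chi_U)=0$. First I reduce to the standing assumption $\textup{Int}(K)\neq\emptyset$ of Lemma~\ref{Lem: Main}: if $K$ lies in a proper affine subspace then restrict the pencil to $\textup{aff}(K)$; the only remaining degenerate case is $K$ being a single point, arising when $K_m\neq\emptyset$ as observed before Lemma~\ref{Lem: empty ker}, and is trivial.

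For a fixed pair $(x,y)$ the building block couples Lemmas~\ref{Lem: cont gamma 2} and~\ref{Lem: Main}. For any extreme point $z\in\textup{ex}(K)\setminus\{y\}$ put $i_z=\dim\ker Q(z)$; Lemma~\ref{Lem: cont gamma 2} applied to $(z,y)$ yields a closed set $F_z\subset K_{i_z}$, an open neighborhood $W_z$ of $z$ in $\mathbb{R}^k$ with $W_z\cap K_{i_z}\subset F_z$, and a continuous unit $\gamma_z:F_z\to\mathbb{R}^m$ with $\gamma_z(\zeta)\in\ker Q(\zeta)$ and $\inf_{F_z\times F_z}k_y^{\gamma_z}>0$. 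Lemma~\ref{Lem: Main} then furnishes an open neighborhood $V_z$ of $y$ in $\mathbb{R}^k$ with $\pi(\chi_{F_z})\phi(\chi_{V_z})\pi(\chi_{F_z})=0$, which is equivalent to $\phi(\chi_{V_z})\pi(\chi_{F_z})=0$ because $\phi(\chi_{V_z})\ge 0$.

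Choose $\epsilon>0$ so that the compact set $A=\overline{B_\epsilon(x)}\cap\textup{ex}(K)$ is disjoint from $\{y\}$, and decompose $A=\bigsqcup_{j\le i_x}(A\cap K_j)$ using the finite stratification; strata with $j>i_x$ are excluded by upper semicontinuity of $\dim\ker Q(\cdot)$. The plan is to select finitely many centres $z_1,\dots,z_N\in A$ so that for every $q\in A$ some $l$ satisfies $i_{z_l}=i_q$ and $q\in W_{z_l}\cap K_{i_q}\subset F_{z_l}$. Granting this, $F=\bigcup_l F_{z_l}\supset A$ and $V=\bigcap_l V_{z_l}$ is an open neighborhood of $y$; after shrinking the $V_{z_l}$'s one may assume $V\cap B_{\epsilon'}(x)=\emptyset$ for some $\epsilon'<\epsilon$. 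Summing $\phi(\chi_V)\pi(\chi_{F_{z_l}})=0$ over $l$ gives $\pi(\chi_F)\phi(\chi_V)\pi(\chi_F)=0$; setting $U=B_{\epsilon'}(x)\cap\textup{ex}(K)\subset F$ yields $\pi(\chi_U)\phi(\chi_V)\pi(\chi_U)=0$, and Theorem~\ref{Thm: final step} concludes $\pi=\phi$.

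I expect the main obstacle to be the stratum-respecting finite cover. A naive finite subcover of $A$ by $\{W_z\}_{z\in A}$ need not pair the stratum $i_{z_l}$ of a chosen centre with the stratum $i_q$ of the covered point, and the slices $A\cap K_j$ are only locally closed in $A$, hence may fail to be compact in their own right. The plan is to process the at most $m$ strata one by one using the local closedness from Lemma~\ref{Lem: loc clo}: for each $j$, cover the points of $A$ in stratum $j$ by sets of the form $W_z\cap K_j$ with $z\in A\cap K_j$, inserting centres at every scale approaching the accumulation points of $K_j$-extreme points in higher strata (in particular $x$ itself when $j<i_x$); the rank-$j$ Riesz projection structure from Lemma~\ref{Lem: cont gamma} controls how far each $W_z$ reaches, and the finiteness of $m$ together with compactness of $A\cap E_j$ keep the total selection finite.
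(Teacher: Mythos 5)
Your overall architecture matches the paper's: reduce to $\textup{Int}(K)\neq\emptyset$ via the affine hull, handle the singleton case separately, and verify the hypothesis of Theorem~\ref{Thm: final step} by combining Lemma~\ref{Lem: cont gamma 2} with Lemma~\ref{Lem: Main} at each pair of distinct extreme points. The assembly step, however, has a genuine gap, and you have correctly identified where it is but not repaired it. Your argument requires a \emph{finite} set of centres $z_1,\dots,z_N$ such that every $q\in A$ lies in some $F_{z_l}$ with $i_{z_l}=i_q$. Each $F_{z_l}$ with $i_{z_l}=j$ is, by Lemma~\ref{Lem: cont gamma 2}, a subset of $K_j$ that is \emph{closed in $\mathbb{R}^k$}. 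Hence a finite union of such sets is a closed subset of $K_j$, and it can never contain $A\cap K_j$ when $A\cap K_j$ has an accumulation point in a higher stratum. This situation occurs for concrete spectrahedra satisfying the hypotheses of the theorem: for the lens $K=\{(x,y):\ (x-a)^2+y^2\le 1\}\cap\{(x,y):\ (x+a)^2+y^2\le 1\}$ (a compact spectrahedron with $\textup{ex}(K)=\partial K$ closed), the two corner points lie in $K_2$ while the open arcs of extreme points accumulating at them lie in $K_1$; no finite union of $\mathbb{R}^k$-closed subsets of $K_1$ covers a punctured arc neighborhood of a corner. Your proposed remedy of ``inserting centres at every scale approaching the accumulation points'' is self-defeating: it forces infinitely many centres, and then $V=\bigcap_l V_{z_l}$ need no longer be a neighborhood of $y$, while $F=\bigcup_l F_{z_l}$ need no longer be closed.

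The paper resolves exactly this by giving up finiteness on the $x$-side and compensating on the $y$-side. It fixes $U\ni x$ and $V\ni y$ with $U\cap\overline V=\emptyset$ \emph{in advance}; for each $z$ it covers the compact set $\overline V$ by finitely many of the neighborhoods produced by Lemma~\ref{Lem: Main} (applied with varying $\omega\in\overline V\cap\textup{ex}(K)$), so that the resulting vanishing holds against the fixed $V$ rather than against a $z$-dependent shrinking neighborhood of $y$. It then takes only a \emph{countable} (Lindel\"of) cover of $U\cap K_j$, obtains $\pi(\chi_{U^j_N})\phi(\chi_V)\pi(\chi_{U^j_N})=0$ for the increasing finite unions via \cite[Corollary 1.2]{Br}, and passes to the SOT limit $\pi(\chi_{U^j_N})\to\pi(\chi_{U\cap K_j})$; finally it sums over the finitely many strata $j$. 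To fix your proof you need both ingredients: replace the finite stratum-respecting cover by a countable one followed by an SOT limit of the projections, and decouple $V$ from the selection of centres by covering a fixed compact $\overline V$ rather than intersecting the $V_{z_l}$. The remaining parts of your proposal (the role of upper semicontinuity of $\dim\ker Q$, the equivalence $\pi(\chi_F)\phi(\chi_V)\pi(\chi_F)=0\iff\phi(\chi_V)\pi(\chi_F)=0$ for positive $\phi(\chi_V)$, and working only with extreme points since $\pi$ represents $C(\textup{ex}(K))$) are sound.
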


\begin{proof}
Let $K\subset\mathbb{R}^k$ be a compact spectrahedron such that $\textup{ex}(K)$ is closed. Since $K$ is compact and $\textup{ex}(K)$ is closed in $K$, the space $\textup{ex}(K)$ is compact and metrizable.\\
Let $\pi:C(\textup{ex}(K))\to\mathcal{B}(H)$ be a unital $*$-homomorphism and let $\phi:C(\textup{ex}(K))\to\mathcal{B}(H)$ be a u.c.p.\ map
such that $\phi=\pi$ on $A(K)$. We show that $\phi=\pi$ on $C(\textup{ex}(K))$.\\
Following the discussion at the beginning of this section, we know that there exists a symmetric pencil $Q$ such that
  \[
    K=\{z\in\mathbb{R}^k;\ Q(z)\ge0\}.
  \]
\textbf{Case 1: $\textup{Int}(K)\neq\emptyset$.}\\ 
By Theorem \ref{Thm: final step}, it suffices to prove that for every $x\neq y\in\textup{ex}(K)$ there are neighborhoods $U$ of $x$ and $V$ of $y$ such that 
  \[
    \pi(\chi_U)\phi(\chi_V)\pi(\chi_U)=0.
  \]
Fix $x\neq y\in\textup{ex}(K)$. Since $x,y\in\mathbb{R}^k$, we find open sets $U$ and $V$ with $x\in U$, $y\in V$, $U\cap\overline{V}=\emptyset$ and $\overline{V}$ compact.\\
Fix $j\in\{1,\dots,m\}$ and let $z\in U\cap K_j$ and $\omega\in\overline{V}$. Applying Lemma \ref{Lem: cont gamma 2} and Lemma \ref{Lem: Main}, we obtain an open neighborhood $U_{z,\omega}$ of $z$ such that $U_{z, \omega}\subset U$ and an open neighborhood $V_{z,\omega}$ of $\omega$ such that 
  \[
    \pi(\chi_{U_{z,\omega}\cap K_j})\phi(\chi_{V_{z,\omega}})\pi(\chi_{U_{z,\omega}\cap K_j})=0.
  \]
(If $z\notin\textup{ex}(K)$ or $\omega\notin\textup{ex}(K)$, pick open neighborhoods $z\in U_{z,\omega}\subset U$ and $\omega\in V_{z,\omega}$ with $U_{z,\omega}\cap \textup{ex}(K)=\emptyset$ and/or $V_{z,\omega}\cap\textup{ex}(K)=\emptyset$ accordingly; such neighborhoods exist because $\textup{ex}(K)$ is closed.) \\
Now fix $z\in U\cap K_j$. Since $(V_{z,\omega})_{\omega\in\overline{V}}$ is an open covering of the compact set $\overline{V}$, there exists a finite subcover $(V_{z,\omega_i})_{i=1}^n$. Set
  \[
    U_z=\bigcap_{i=1}^nU_{z,\omega_i}\qquad \textup{and}\qquad V_z=\bigcup_{i=1}^nV_{z,\omega_i}.
  \]
Then $U_z$ is an open neighborhood of $z$ contained in $U$, and $V\subset V_z$. Hence
  \[
    \pi(\chi_{U_z\cap K_j})\phi(\chi_V)\pi(\chi_{U_z\cap K_j})=0 \qquad \textup{for all}\ z\in U\cap K_j.
  \]
For each $j$, the sets $(U_z)_{z\in U\cap K_j}$ form an open cover of $U\cap K_j$. As $U\cap K_j$ is Lindel\"of, we can choose points $z^j_i\in U\cap K_j$ such that 
  \[
    U\cap K_j\subset\bigcup_{i=1}^\infty U_{z^j_i}.
  \]
Define the increasing open sets
  \[
    U^j_N=K_j\cap\left(\bigcup_{i=1}^N U_{z^{j}_i}, \qquad N\in\mathbb{N}\right).
  \]
By \cite[Corollary 1.2]{Br} we have that for all $N\in\mathbb{N}$
  \[
    \pi(\chi_{U^j_N})\phi(\chi_V)\pi(\chi_{U^j_N})=0.
  \]
Finally, $\pi(\chi_{U^j_N})\to\pi(\chi_{U\cap K_j})$ SOT, hence taking SOT-limits yields
  \[
    \pi(\chi_{U\cap K_j})\phi(\chi_V)\pi(\chi_{U\cap K_j})=0,
  \]
and once more with \cite[Corollary 1.2]{Br}, we get that
  \[
    \pi(\chi_{U})\phi(\chi_V)\pi(\chi_{U})\le\sum_{j=1}^m
    \pi(\chi_{U\cap K_j})\phi(\chi_V)\pi(\chi_{U\cap K_j})=0.
  \]
Thus the hypothesis of Theorem \ref{Thm: final step} holds, and therefore $\pi=\phi$. This proves the hyperrigidity of $A(K)$ in $C(\textup{ex}(K))$ when $\textup{Int}(K)\neq\emptyset$.\\

\textbf{Case 2: $\textup{Int}(K)=\emptyset$.}\\
Denote the affine hull of a set by $\textup{aff}(\cdot)$. Let $d=\dim(\textup{aff}(K))<k$ and choose an affine homeomorphism $L:\mathbb{R}^d\to \textup{aff}(K)$.
Set $\widetilde K=L^{-1}(K)$ and $\widetilde Q(w)=Q(L(w))$. Then
\[
  \widetilde K=\{w\in\mathbb{R}^d;\ \widetilde Q(w)\ge 0\},
\]
so $\widetilde K$ is a compact spectrahedron in $\mathbb{R}^d$, and $\textup{aff}(\widetilde K)=\mathbb{R}^d$.
In particular, $\textup{Int}(\widetilde K)\neq\emptyset$ unless $K$ is a singleton.\\
Moreover, $L$ induces a homeomorphism $\textup{ex}(\widetilde K)\cong \textup{ex}(K)$ (affine bijections preserve extreme points), and composition with $L$ yields a unital complete order isomorphism between $A(K)$ and $A(\widetilde K)$.
Thus hyperrigidity of $A(K)$ in $C(\textup{ex}(K))$ is equivalent to hyperrigidity of $A(\widetilde K)$ in $C(\textup{ex}(\widetilde K))$.\\
If $K$ is not a singleton, then $\textup{Int}(\widetilde K)\neq\emptyset$ and Case~1 applied to $\widetilde K$ yields the desired conclusion for $K$.\\
If $K$ is a singleton, then $A(K)=C(\textup{ex}(K))$ and hyperrigidity is trivial.
\end{proof}


\bibliography{Bib_Hyper_Spectrahedra} 

@article {Arv1,
    AUTHOR = {Arveson, William},
     TITLE = {{The noncommutative {C}hoquet boundary {II}: hyperrigidity}},
   JOURNAL = {Israel J. Math.},
  FJOURNAL = {Israel Journal of Mathematics},
    VOLUME = {184},
      YEAR = {2011},
     PAGES = {349--385},
      ISSN = {0021-2172,1565-8511},
   MRCLASS = {46L05 (47A67)},
  MRNUMBER = {2823981},
MRREVIEWER = {Christian\ Le Merdy},
       DOI = {10.1007/s11856-011-0071-z},
       URL = {https://doi.org/10.1007/s11856-011-0071-z},
}

@misc{BB,
      title={Maximality of correspondence representations}, 
      author={Boris Bilich},
      year={2024},
      eprint={2407.04278},
      archivePrefix={arXiv},
      primaryClass={math.OA},
      note={\newline\url{https://arxiv.org/abs/2407.04278}}, 
}

@misc{BiDo,
      title={{Arveson's hyperrigidity conjecture is false}}, 
      author={Boris Bilich and Adam Dor-On},
      year={2024},
      eprint={2404.05018},
      archivePrefix={arXiv},
      primaryClass={math.OA},
      url={https://arxiv.org/abs/2404.05018}, 
}

@article {Br,
    AUTHOR = {Brown, Lawrence G.},
     TITLE = {{Convergence of functions of self-adjoint operators and
              applications}},
   JOURNAL = {Publ. Mat.},
  FJOURNAL = {Publicacions Matem\`atiques},
    VOLUME = {60},
      YEAR = {2016},
    NUMBER = {2},
     PAGES = {551--564},
      ISSN = {0214-1493,2014-4350},
   MRCLASS = {47B15 (46L05 47A60)},
  MRNUMBER = {3521500},
MRREVIEWER = {Vladimir\ Manuilov},
       DOI = {10.5565/PUBLMAT\_60216\_09},
       URL = {https://doi.org/10.5565/PUBLMAT_60216_09},
}

@misc{Clo,
      title={{A new obstruction to Arveson's hyperrigidity conjecture}}, 
      author={Rapha\"el Clou\^atre},
      year={2025},
      eprint={2509.19238},
      archivePrefix={arXiv},
      primaryClass={math.OA},
      url={https://arxiv.org/abs/2509.19238}, 
}

@article {DaKe,
    AUTHOR = {Davidson, Kenneth R. and Kennedy, Matthew},
     TITLE = {{Choquet order and hyperrigidity for function systems}},
   JOURNAL = {Adv. Math.},
  FJOURNAL = {Advances in Mathematics},
    VOLUME = {385},
      YEAR = {2021},
     PAGES = {Paper No. 107774, 30},
      ISSN = {0001-8708,1090-2082},
   MRCLASS = {46A55 (41A36 46L05 47A20 47A58 47L25)},
  MRNUMBER = {4252761},
MRREVIEWER = {Charles\ Batty},
       DOI = {10.1016/j.aim.2021.107774},
       URL = {https://doi.org/10.1016/j.aim.2021.107774},
}

@misc{FrMo,
  author       = {Laurent, Monique and Vallentin, Frank},
  title        = {{Semidefinite Optimization}},
  howpublished = {Lecture notes (Mastermath), Spring 2012},
  year         = {2012},
  month        = may,
  date         = {2012-05-22},
  url          = {https://www.mi.uni-koeln.de/opt/wp-content/uploads/2015/06/laurent_vallentin_sdo_2012_05.pdf},
  note         = {Lecture notes},
}

@book {Pa,
    AUTHOR = {Paulsen, Vern},
     TITLE = {{Completely bounded maps and operator algebras}},
    SERIES = {Cambridge Studies in Advanced Mathematics},
    VOLUME = {78},
 PUBLISHER = {Cambridge University Press, Cambridge},
      YEAR = {2002},
     PAGES = {xii+300},
      ISBN = {0-521-81669-6},
   MRCLASS = {46L07 (47A20 47L30)},
  MRNUMBER = {1976867},
MRREVIEWER = {Christian\ Le Merdy},
}

@misc{PaSt,
      title={{Hyperrigidity {I}: singly generated commutative {C}*-algebras}}, 
      author={Pawe\l\ Pietrzycki and Jan Stochel},
      year={2024},
      eprint={2405.20814},
      archivePrefix={arXiv},
      primaryClass={math.OA},
      note={\url{https://arxiv.org/abs/2405.20814}}, 
}

@article{Rea,
  author  = {Reams, Robert},
  title   = {{Hadamard inverses, square roots and products of almost positive semidefinite matrices}},
  journal = {Linear Algebra and its Applications},
  volume  = {288},
  number  = {1-3},
  pages   = {35--43},
  year    = {1999},
  month   = feb,
  doi     = {10.1016/S0024-3795(98)10162-3},
  mrnumber= {1670582}
}

@misc{Sch1,
  author       = {Scherer, Marcel},
  title        = {{The Hyperrigidity Conjecture for compact convex sets in $\mathbb{R}^2$}},
  year         = {2024},
  month        = nov,
  eprint       = {2411.11709},
  archivePrefix= {arXiv},
  primaryClass = {math.FA},
  doi          = {10.48550/arXiv.2411.11709},
  url          = {https://arxiv.org/abs/2411.11709},
  note         = {arXiv:2411.11709}
}

@misc{Sch2,
      title={{A Finite Dimensional Counterexample for Arveson's Hyperrigidity Conjecture}}, 
      author={Marcel Scherer},
      year={2025},
      eprint={2502.10286},
      archivePrefix={arXiv},
      primaryClass={math.FA},
      url={https://arxiv.org/abs/2502.10286}, 
}

@book{Tao,
  author    = {Tao, Terence},
  title     = {Topics in Random Matrix Theory},
  series    = {Graduate Studies in Mathematics},
  volume    = {132},
  publisher = {American Mathematical Society},
  address   = {Providence, RI},
  year      = {2012},
  isbn      = {978-0-8218-7430-1},
  pages     = {x+282}
}

@article{MaSt,
  author       = {Weis, Stephan and Shirokov, Maksim},
  title        = {{The Face Generated by a Point, Generalized Affine Constraints, and Quantum Theory}},
  journal      = {Journal of Convex Analysis},
  year         = {2021},
  volume       = {28},
  number       = {3},
  pages        = {847--870},
  doi          = {10.48550/arXiv.2003.14302},
  eprint       = {2003.14302},
  archivePrefix= {arXiv},
  primaryClass = {math.FA},
  note         = {Also available as arXiv:2003.14302},
}
\bibliographystyle{plain}
Technion, Haifa, Israel\\
\textit{Email address:} scherer@math.uni-sb.de

\end{document}